\documentclass{article}
\usepackage{graphicx} 

\usepackage[usenames,dvipsnames]{xcolor} 
\usepackage{amsmath,amsthm,amssymb,amsfonts} 
\usepackage{mathtools} 
\usepackage{yfonts} 
\usepackage[T1]{fontenc} 
\usepackage[utf8]{inputenc}\DeclareUnicodeCharacter{2212}{-} 
\usepackage{bbm} 
\usepackage[outline]{contour}

\usepackage{float} 

\usepackage{fix-cm} 
\usepackage{lmodern} 
\usepackage[explicit]{titlesec} 

\usepackage[%
  margin=1in, bindingoffset=0.25in]{geometry} 
\usepackage{pdfpages} 
\usepackage{import} 
\usepackage{parskip} 

\usepackage{lipsum} 

\usepackage{todonotes} 

\usepackage{hyperref} 

\usepackage{refcount} 
\usepackage{xstring} 
\usepackage{regexpatch} 

\usepackage{scalerel} 
\usepackage{graphicx} 
\usepackage{pgf} 
\usepackage{eso-pic} 
\usepackage{transparent} 

\usepackage{environ} 
\usepackage[many]{tcolorbox} 
\tcbuselibrary{skins}
\usepackage{enumitem} 
\usepackage{thmtools} 

\usepackage{xifthen} 

\usepackage{booktabs} 
\usepackage{diagbox} 
\usepackage{tabularx} 
\usepackage{multirow} 
\usepackage{colortbl} 
\usepackage{arydshln} 

\usepackage{subcaption} 

\usepackage{tikz-cd}

\usepackage{cleveref}
\usepackage[
    backend=biber,
    style=numeric,
    citestyle=numeric,
    doi=false,
    isbn=false,
    url=false,
    maxnames=50,
    firstinits=true]{biblatex}
\usepackage[ruled, algosection]{algorithm2e}

\SetCommentSty{bluefont}

\declaretheoremstyle[
  spaceabove=\parsep,
  spacebelow=0,
  headfont=\bfseries,
  notefont=\bfseries,
  notebraces={(}{)},
  bodyfont=\normalfont,
  postheadspace=.5em
]{definition}

\newtheoremstyle{plain}         
    {\parsep}                   
    {}                          
    {\itshape}                  
    {}                          
    {\bfseries}                 
    {.}                         
    {.5em}                      
    {\thmname{#1}\thmnumber{ #2}\thmnote{ \bfseries (#3)}}                 

\theoremstyle{plain}
\declaretheorem[sharenumber=algocf]{theorem}
\declaretheorem[sharenumber=algocf]{lemma}

\declaretheorem[sharenumber=algocf]{proposition}

\declaretheorem[sharenumber=algocf]{definition}

\declaretheorem[sharenumber=algocf]{remark}

\declaretheorem[style=algocf, numbered=no,name=Theorem]{thm}

\declaretheorem[style=algocf, numbered=no,name=Approach]{approach}

\newtheoremstyle{proc}{\parsep}                   
{}                          
{\itshape}                  
{}                          
{\bfseries}                 
{.}                         
{.5em}                      
{Procedure\thmnumber{ #2}\thmnote{ \bfseries (#3)}}
\theoremstyle{proc}

\makeatletter
\long\def\@secondofthree#1#2#3{#2}
\long\def\@thirdoffour#1#2#3#4{#3}

\NewEnviron{restatethis}[2]{%
  \begin{#1}%
      \BODY
      \protected@write\@auxout{}{%
        \string\@restatetheorem{#1}{#2}{\csname the#1\endcsname}{\detokenize\expandafter{\BODY}}%
      }%
  \end{#1}%
}

\@ifpackageloaded{thmtools}{%
    \def\restatethm@getthmcountercsname#1{\def\thethmcsname{#1}}%
}{%
    \@ifpackageloaded{ntheorem}{%
        \def\restatethm@getthmcountercsname#1{%
            \def\thethmcsname{\expandafter\expandafter\expandafter\restatethm@ntheorem@getthmcountercsname@helper\csname mkheader@#1\endcsname}}%
        \def\restatethm@ntheorem@getthmcountercsname@helper#1\@thm#2#3#4{#3}
    }{%
        \@ifpackageloaded{amsthm}{%
            \def\restatethm@getthmcountercsname#1{\edef\thethmcsname{\expandafter\expandafter\expandafter\@thirdoffour\csname#1\endcsname}}%
        }{%
            \def\restatethm@getthmcountercsname#1{\edef\thethmcsname{\expandafter\expandafter\expandafter\@secondofthree\csname#1\endcsname}}%
        }%
    }%
}

\newcommand{\@restatetheorem}[4]{%
  \expandafter\gdef\csname restatethis@#2\endcsname{%
    \begingroup
    \restatethm@getthmcountercsname{#1}
    \expandafter\def\csname the\thethmcsname\endcsname{#3}%
    \begin{#1}#4\end{#1}%
    \endgroup
  }%
}
\newcommand{\restate}[1]{\csname restatethis@#1\endcsname} 
\makeatother

\makeatletter
\providecommand*{\cupdot}{%
  \mathbin{%
    \mathpalette\@cupdot{}%
  }%
}
\newcommand*{\@cupdot}[2]{%
  \ooalign{%
    $\m@th#1\cup$\cr
    \sbox0{$#1\cup$}%
    \dimen@=\ht0 %
    \sbox0{$\m@th#1\cdot$}%
    \advance\dimen@ by -\ht0 %
    \dimen@=.5\dimen@
    \hidewidth\raise\dimen@\box0\hidewidth
  }%
}

\providecommand*{\bigcupdot}{%
  \mathop{%
    \vphantom{\bigcup}%
    \mathpalette\@bigcupdot{}%
  }%
}
\newcommand*{\@bigcupdot}[2]{%
  \ooalign{%
    $\m@th#1\bigcup$\cr
    \sbox0{$#1\bigcup$}%
    \dimen@=\ht0 %
    \advance\dimen@ by -\dp0 %
    \sbox0{\scalebox{2}{$\m@th#1\cdot$}}%
    \advance\dimen@ by -\ht0 %
    \dimen@=.5\dimen@
    \hidewidth\raise\dimen@\box0\hidewidth
  }%
}
\makeatother

\makeatletter
\renewenvironment{proof}[1][\proofname]{\par
  \pushQED{\qed}%
  \normalfont \topsep6\p@\@plus6\p@\relax
  \trivlist
  \item[\hskip\labelsep
        \itshape
    #1\@addpunct{.}]\mbox{}\\*
}{%
  \popQED\endtrivlist\@endpefalse
}
\makeatother

\makeatletter
\newcommand{\algorithmstyle}[1]{\renewcommand{\algocf@style}{#1}}
\makeatother

\makeatletter
\newcommand{\nosemic}{\renewcommand{\@endalgocfline}{\relax}}
\newcommand{\dosemic}{\renewcommand{\@endalgocfline}{\algocf@endline}}
\makeatother

\SetKwComment{Comment}{\# }{}

\newcommand\R{\mathbb R}
\newcommand\Z{\mathbb Z}
\newcommand\N{\mathbb N}

\renewcommand{\phi}{\varphi}
\renewcommand{\epsilon}{\varepsilon}

\newcommand{\stab}{\mathrm{Stab}}

\newcommand{\vol}{\mathrm{vol}}

\newcommand{\T}{\mathcal{T}}
\renewcommand{\P}{\mathcal{P}}
\renewcommand{\L}{\mathcal{L}}

\DeclareMathOperator*{\E}{E}
\DeclareMathOperator*{\OO}{O}
\DeclareMathOperator*{\Id}{Id}
\DeclareMathOperator*{\SE}{SE}
\DeclareMathOperator*{\argmin}{argmin} 

\title{An algorithmic approach for computing fundamental domains of crystallographic groups}
\author{
    Reymond Akpanya\thanks{RWTH Aachen University, Chair of Algebra and Representation Theory, Pontdriesch 10-12, 52062 Aachen, Germany. Email: \texttt{reymond.akpanya@rwth-aachen.de}}
\thanks{School of Mathematics and Statistics, The University of Sydney, Carslaw Building F07,
Camperdown NSW 2006, Australia. E-mail: {\tt reymond.akpanya@sydney.edu.au}.}\and 
    Alice C. Niemeyer\thanks{RWTH Aachen University, Chair of Algebra and Representation Theory, Pontdriesch 10-12, 52062 Aachen, Germany. Email: \texttt{alice.niemeyer@momo.math.rwth-aachen.de}} \and Lukas Schnelle \thanks{RWTH Aachen University, Chair of Algebra and Representation Theory, Pontdriesch 10-12, 52062 Aachen, Germany. Email: \texttt{lukas.schnelle1@rwth-aachen.de}, corresponding author} 
}
\date{}

\begin{document}

\maketitle


\begin{abstract}
A crystallographic group is a discrete subgroup of the Euclidean group $\operatorname{E}(n)$ that has a compact fundamental domain. Since such a crystallographic group $\Gamma$ is infinite, computing fundamental domains of $\Gamma$ is algorithmically challenging.
We address this difficulty by targeting the computation of Dirichlet cells that can form fundamental domains of $\Gamma$. We show that the half-spaces defining such a Dirichlet cell can be derived from elements of $\Gamma$ acting on $\mathbb{R}^n$ that can be expressed as words of bounded length in a suitable generating set. Based on these results, we design an algorithm for the computation of fundamental domains of crystallographic groups and exploit it to study the construction of topological interlocking assemblies.
\end{abstract}

\textbf{Keywords:} Crystallographic groups, algorithmic group theory, fundamental domains, topological interlocking

\textbf{MSC:} 20H15, 52B55, 52C22, 68U05, 20F65

\section{Introduction}
One definition of a crystallographic group $\Gamma \leq \E(n)$ is that there exists a compact \emph{fundamental domain}  $F \subset \mathbb{R}^n$ of $\Gamma$, a compact subset which contains a system of representatives $F^\circ \subseteq V \subseteq F$ for the natural action of $\Gamma$ on $\mathbb{R}^n$. Hence, a fundamental domain $F$ of $\Gamma$ allows us to tile the Euclidean space $\mathbb{R}^n$ through the action of $\Gamma$ on $F$.
This property makes crystallographic groups interesting not only from a purely mathematical perspective but also from an artistic one. For instance, the Dutch artist M.\ C.\ Escher exploited crystallographic groups in dimension $2$ to create artworks that continue to fascinate both mathematicians and artists to this day (see \cite{escher_fish_1954} for examples of his work).
In 1912, Bieberbach proved that there is only a finite number of crystallographic groups $\Gamma\leq \E(n)$ for every $n\in \mathbb{N}$, see~\cite{bieberbach1912groups}. 
Furthermore, for $n\leq6$ all crystallographic groups of dimension $n$ are known and enumerated (up to isomorphism). These groups, together with additional structural information such as standard generating sets, can be found in \cite{brown1978list}.
For instance, for $n=2$ there exist exactly $17$ crystallographic groups (wallpaper groups) and for $n=3$ there are exactly $230$ crystallographic groups (space groups).

Since the definition of a crystallographic group requires the existence of a compact fundamental domain, it is natural to ask how such a fundamental domain can be computed for a given crystallographic group $\Gamma\leq \E(n)$. This problem is the focus of this paper.
 If $\Gamma$ arises from a suitable generating set, we are able to answer this question by presenting an algorithm that computes Dirichlet cells forming fundamental domains of $\Gamma$. (The choice of a suitable generating set of $\Gamma$ is described in \Cref{not:cryst-grp-cosets}.)
A Dirichlet cell (also referred to as Voronoi domain) is a convex set in $\mathbb{R}^n$ that can be constructed as the intersection of half spaces $H^+(u,w):=\{v\in \mathbb{R}^n\mid \Vert u-v\Vert \leq \Vert w-v\Vert \}$ between points $u$ and $w$ in $\mathbb{R}^n$, see \Cref{def:dirichlet-cell}. Loosely speaking, $H^+(u,w)$ contains every point $v\in \mathbb{R}^n$ that is closer to $u$ than to $w$ with respect to the Euclidean norm.
For a given crystallographic group $\Gamma\leq \E(n)$ and a point $u\in \mathbb{R}^n$ satisfying $\vert \stab_{\Gamma}(u)\vert =1$, the intersection of all the half spaces $H^+(u,w),$ where $w\in u^\Gamma$, is a Dirichlet cell $D(u,u^\Gamma)$ forming a fundamental domain of $\Gamma$ (see Definition~\ref{def:dirichlet-cell} and Appendix~\ref{apx:thm:dirichlet-is-fund-dom}). Although $\vert u^\Gamma \vert = \infty,$ there exist finitely many $w_1,\ldots,w_\ell\in u^\Gamma$ such that $D(u,u^\Gamma)=D(u,\{w_1,\ldots,w_\ell\}).$
In this work, we present an algorithmic approach for the construction of these elements by solving the word problem: We show that the points $w_1,\ldots,w_\ell$ can be obtained from elements in $\Gamma$ that can be written as words of bounded length in a suitable generating set of $\Gamma.$
In particular, we prove the following theorem.

\begin{thm}[see Theorem \ref{thm:dirichlet-word-length}]
Let $\Gamma\leq \E(n)$ be a crystallographic group generated by a finite set $S$ as described in \Cref{not:cryst-grp-cosets}. Further, let $u\in \mathbb{R}^n$ be a point with $\vert \stab_{\Gamma}(u) \vert =1. $
Then there exists an $A\in \mathbb{N}$ and a finite set $M\subset \Gamma$ such that
(1) every $\phi\in M$ can be written as a word of length at most $A$ in $S$ and (2) $D(u,u^\Gamma)=D(u,u^M).$
\end{thm}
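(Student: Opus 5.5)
The plan is to reduce the statement to a compactness argument followed by a word-length bound. We use the structure of $\Gamma$ from \Cref{not:cryst-grp-cosets}: $\Gamma$ has a translation lattice $T=\Gamma\cap\R^n\cong\Z^n$ of finite index, generated by translations $t_1,\dots,t_n\in S$ whose vectors form a basis $b_1,\dots,b_n$. The remaining generators $g_1,\dots,g_k\in S$ are coset representatives of $T$ in $\Gamma$. Every $\phi\in\Gamma$ can therefore be written uniquely as $\phi=t\,g_i$ with $t\in T$, $t=t_1^{m_1}\cdots t_n^{m_n}$. This gives a word in $S$ of length at most $1+\sum_j|m_j|$, possibly after adding inverses of the generators to $S$.

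\textbf{Step 1 (only nearby orbit points matter).} First we show that there is a radius $R$ such that $D(u,u^\Gamma)=D(u,u^\Gamma\cap B(u,R))$. Let $P$ be the fundamental parallelepiped of $T$ and set $\rho=\operatorname{diam}(P)$. Then every point $v\in\R^n$ lies within distance $\rho$ of some point of $u^T\subseteq u^\Gamma$. Hence $D(u,u^\Gamma)\subseteq B(u,\rho)$, since a point $v$ with $\|v-u\|>\rho$ is strictly closer to some $w\in u^T$ than to $u$. Now suppose $\|w-u\|>2\rho$ and $v\in B(u,\rho)$. Then $\|w-v\|>\rho\geq\|u-v\|$, so $H^+(u,w)\supseteq B(u,\rho)$. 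The step to get right is that the intersection over the near points, $D'=D(u,u^\Gamma\cap B(u,2\rho))$, is already contained in $B(u,\rho)$; otherwise the far half-spaces could matter outside the ball. This holds because the near set contains $u^{t}$ for enough translations $t$, for instance $t=\pm t_j$ and the lattice points adjacent to $u$'s cell. More robustly, we can argue with convexity: $D'$ is convex and contains $u$. If $D'$ contained a point $v$ at distance greater than $\rho$, the segment from $u$ to $v$ would contain a point $v'$ at distance exactly $\rho+\epsilon$ that still lies in $D'$. The lattice point nearest to $v'$ is at distance at most $\rho$ from $v'$, hence within $2\rho+\epsilon$ of $u$. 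So we take $R=2\rho+1$ and choose $\epsilon<1$, which puts that lattice point in the near set and gives a contradiction. So $R=2\rho+1$ works.

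\textbf{Step 2 (finiteness).} Because $\Gamma$ acts discretely and $\stab_\Gamma(u)$ is trivial, the set $M=\{\phi\in\Gamma : \|u^\phi-u\|\le R\}$ is finite. For each coset $Tg_i$, the map $t\mapsto u^{tg_i}$ is a bijection onto a translate of the lattice $T$, which meets $B(u,R)$ in finitely many points. By Step 1, $D(u,u^\Gamma)=D(u,u^M)$, which proves (2).

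\textbf{Step 3 (word length).} For $\phi=t g_i\in M$ we write $u^{\phi}=u^{g_i}+\sum_j m_j b_j$ (or the analogous expression, depending on the composition convention). Then $\bigl\|\sum_j m_jb_j\bigr\|\le R+\max_i\|u^{g_i}-u\|=:R'$. Since $b_1,\dots,b_n$ is a basis, there is a constant $c>0$, the smallest singular value of the basis matrix, with $\|\sum m_jb_j\|\ge c\,\|m\|_2$. This gives $\sum|m_j|\le\sqrt n\,R'/c$. So $A=1+\lfloor\sqrt n R'/c\rfloor$ bounds the word lengths, which proves (1).

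The main obstacle is the containment claim in Step 1: we must show that the finitely many nearby half-spaces already cut the cell down to a bounded region, rather than merely showing that the far ones are redundant inside $B(u,\rho)$. The convexity argument above is how I would handle it. Steps 2 and 3 are routine.
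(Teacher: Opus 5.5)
Your proof is correct and follows essentially the same route as the paper. The paper truncates to orbit points in a ball of radius twice the cell's radius (\Cref{rmk:dirichlet-finite-intersection}), writes each relevant element as a coset representative times a translation of norm at most that radius plus $\max_i\Vert u^{\rho_i}-u\Vert$ (\Cref{lma:word-length-dist}), and bounds the translation's word length using the inverse lattice basis matrix (\Cref{lma:translation-short-word}), exactly as your three steps do. The differences are minor: you use $\operatorname{diam}(P)$ for the radius, you prove the convexity argument that the paper's truncation remark only cites from Plesken, and you use $\sqrt{n}/\sigma_{\min}$ where the paper uses $\Vert B\Vert_{2,1}$.
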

Note, existing algorithms to compute fundamental domains for a given crystallographic group rely on heuristic estimates for the number of elements in $u^\Gamma$ that must be considered to determine $D(u, u^\Gamma)$, see \cite{MR2025179}. Here, we take a different approach by establishing the upper bound on the length of words using a suitably chosen generating set of $\Gamma$ that need to be considered for this computation.
Using this theorem together with some fundamental results, we are able to present Algorithm~\ref{alg:dirichletcell} to compute fundamental domains for crystallographic groups and employ it to compute some fundamental domains of crystallographic groups of dimension $3$. 
Moreover, we use these theoretical results and illustrate an approach for the construction of geometries in $\mathbb{R}^3$ that have the potential to generate topological interlocking assemblies. 
Note, our algorithm relies on the following assumptions:
\begin{enumerate}
    \item The knowledge of a point \( u \in \mathbb{R}^n \) with \( \lvert \stab_{\Gamma}(u) \rvert = 1 \), and
    \item Knowledge of a generating set of \( \Gamma \), as described in \Cref{not:cryst-grp-cosets}.
\end{enumerate}
In our investigations, we apply the algorithm to crystallographic groups $\Gamma \le \E(n)$ with $n=3$. Groups of $n\leq 4$ which satisfy the requirements of \Cref{not:cryst-grp-cosets} can be found in \cite{brown1978list}. Moreover, for any crystallographic group the stabiliser of almost every point $u \in \mathbb{R}^n$ is trivial. Hence, the assumptions required for our method are naturally satisfied in this setting.

This paper is structured as follows: In Section~\ref{section:prelim} we introduce notions on crystallographic groups and Dirichlet domains that are essential for the understanding of this paper.
Next, we derive the theory to establish our algorithm for the computation of fundamental domains in \Cref{section:comp}.
For this, we have to investigate points in general position for a given crystallographic group (see \Cref{section:prelim}) and their resulting Dirichlet domains. We exploit our results to establish \Cref{thm:dirichlet-word-length}. In order to present our proposed algorithm, we further comment on the volume of a fundamental domain of $\Gamma.$
Lastly, we discuss the construction of topological interlocking assemblies, see \Cref{section:TIA}. Inspired by the constructions in \cite{goertzen2024constructinginterlockingassembliescrystallographic,goertzen2022topological} we give a construction of three dimensional geometries that have a high potential of facilitating topological interlocking assemblies. 

Note, it appears that some of the results used in this work are known to the community, although we could not find explicit references. For completeness and clarity, we include proofs of these results in the appendix.

\section{Preliminaries}
\label{section:prelim}
In order to introduce the framework of this work and present our results, we use  
the following definitions:
Let $n\geq 2$ be a natural number and $v,w$ elements in $\mathbb{R}^n$.
Then we denote the Euclidean scalar product of $v$ and $w$ by $\langle v, w \rangle$ and define the \emph{Euclidean norm} of $v$ by $\Vert v \Vert \coloneqq \sqrt{\langle v, v \rangle} $. As usual, the norm $\Vert \cdot \Vert$ induces a metric on $\R^n$ via $d(v, w) \coloneqq \Vert v - w \Vert$.	
For $r\in \R_{>0}$ we define the open \emph{$r$-ball of $v$} as $B_r(v) \coloneqq \{ w \in \R^n \mid d(v, w) < r \}.$
We call a map $\phi : \R^n \to \R^n$ an \emph{isometry}, if it is distance preserving, i.e.\ $d(v^\phi, w^\phi) = d(v, w)$ for all $v, w \in \R^n$.
The set $\E(n)$ of all isometries of $\R^n$ forms a group called the \emph{Euclidean group} of dimension $n.$
A subset $F\subseteq \R^n$ is called a fundamental domain of a group $\Gamma\leq \E(n)$, if (i) $\bigcup_{\gamma \in \Gamma} F^{\gamma} = \R^n$ and (ii) there is a system of representatives $V \subseteq \R^n$ of the orbits of $\Gamma$ on $\R^n$ such that $F^\circ \subseteq V \subseteq F.$
The group $\Gamma$ is called a \emph{crystallographic group}, if $\Gamma$ is a discrete subgroup of $\E(n)$ (this means that $v^\Gamma$ is discrete in $\R^n$ for all $v \in \R^n$) and there exists a compact fundamental domain of $\Gamma$.
Note, that fundamental domains of given crystallographic groups do not have to be connected. However, we assume all fundamental domains in this paper to be connected.
We refer the reader to \cite{eick2006} 
 for further theoretical background and algorithmic methods for the investigation of crystallographic groups. 

Note, the Euclidean group $\E(n)$ is isomorphic to the semidirect product $ \OO(n) \ltimes \R^n$, where $\OO(n)$ denotes the orthogonal group of dimension $n$. 
The existence of an isomorphism $\Psi:\E(n)\to \OO(n) \ltimes \R^n$ allows us to interpret the isometry $\phi\in \E(n)$ as a tuple $\Psi(\phi)=(\phi_o,\phi_t)\in \OO(n) \ltimes \R^n,$ where we call $\phi_o \in \OO(n)$ the \emph{orthogonal component} and $\phi_t \in \R^n$ the \emph{translational component} of $\phi$. The action of $(\phi_o,\phi_t)$ on a point $u \in \R^n$ is given by $u^{(\phi_o, \phi_t)} = u\cdot{\phi_o} + \phi_t$. Here, we choose $\Psi$ to be an isomorphism such that $v^\phi = v^{\Psi(\phi)}$ holds for all $v\in \mathbb{R}^n.$
    Examples of isometries of the Euclidean space $\mathbb{R}^2$ are given by the elements $\phi,\psi\in \E(2)$ defined as follows:
    $$
     \Psi(\phi):=   \left( \begin{pmatrix} -1 &0 \\ 0 &-1\end{pmatrix}, \begin{pmatrix} 0 \\ 0 \end{pmatrix}\right),
     \Psi(\tau_1):=   \left( \begin{pmatrix} 1 &0 \\ 0 &1\end{pmatrix}, \begin{pmatrix} 1 \\ 0 \end{pmatrix}\right),
     \Psi(\tau_2):=   \left( \begin{pmatrix} 1 &0 \\ 0 &1\end{pmatrix}, \begin{pmatrix} 0 \\ 1 \end{pmatrix}\right).
    $$
These two elements generate the wallpaper group $p2=\langle \phi, \tau_1, \tau_2 \rangle$ (using the naming from international notation \cite{ITASubperiodic}).
For a better understanding of the action of this group, we give Figure \ref{fig:pg-action}. This figure illustrates the action of $p2$ on $\R^2.$ If $M$ is the set of black dots forming the letter $\beta$ in the upper rectangle situated in the middle of the described figure, then $M^{p2}$ is given by all the black $\beta$ tiling the plane as indicated in Figure~\ref{fig:pg-action}. 

\begin{figure}[H]
    \centering
    \includegraphics[width=0.35\textwidth]{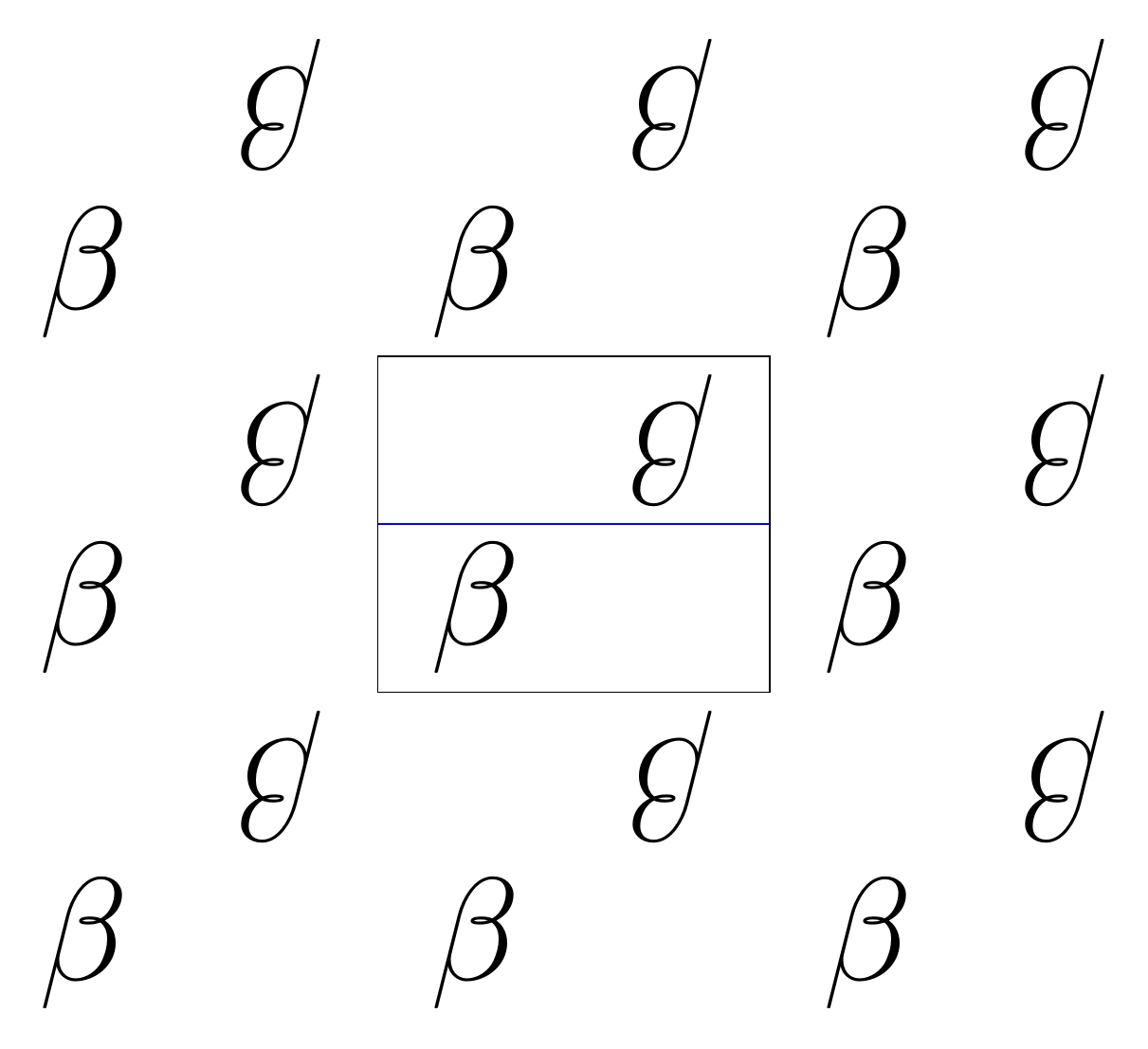}
    \caption{A visualisation of how the group $p2$ acts on $\R^2$. Each of the two rectangles with one blue edge is a fundamental domain of the group. A set of points forming  a letter $\beta$ is placed into one fundamental domain and acted on with the group.}
    \label{fig:pg-action}
\end{figure}

We say that $v\in \mathbb{R}^n$ is in \emph{special position} for a crystallographic group $\Gamma\leq  \E(n)$, if $\stab_{\Gamma}(v) \neq \{\Id\}$. If the stabiliser of $v$ is trivial, then $v$ is said to be in \emph{general position for $\Gamma$}.
For instance, the point $v=(1,1)^t\in \mathbb{R}^2$ forms a point in special position for the group $p2.$ This can be verified by the computation $v^{\phi \cdot \tau_1 \cdot \tau_1 \cdot \tau_2 \cdot \tau_2} ={(( -1,-1)^t)}^{\tau_1 \cdot \tau_1 \cdot \tau_2 \cdot \tau_2}=v$.
 Moreover, we define the \emph{translation (normal) subgroup} of $\Gamma$ as $\T(\Gamma) \coloneqq \{ (\phi_o, \phi_t) \in \Gamma \mid \phi_o = \Id \}$. The set $\L(\Gamma) \coloneqq \{\phi_t \mid (\phi_o, \phi_t) \in \T(\Gamma) \}$ is a full lattice of dimension $n$ satisfying $\T(\Gamma) \cong \L(\Gamma)$.
Additionally, the \emph{point group} of $\Gamma$ is defined as the quotient $\P(\Gamma) \coloneqq \Gamma / \T(\Gamma)$. 
If this quotient group is isomorphic to a subgroup of $\Gamma$, then we call $\Gamma$ \emph{symmomorphic}.  
 
 Previously we have introduced half spaces between different points in $\mathbb{R}^n$. Now, we use this definition to formally introduce Dirichlet domains.
\begin{definition}{\cite[Def. III.1]{plesken2014}}\label{def:dirichlet-cell}
	Let $O\subseteq \R^n$ be a discrete set and $u \in O$ a point. We denote the \emph{Dirichlet cell of $u$} by $D(u,O)$ and define it as the set
	$$
		D(u, O) \coloneqq \{ v \in \R^n \mid \forall w \in O \setminus \{ u \} : d(u, v) \leq d(v, w) \}=\bigcap_{w \in O, w \neq u} H^+(u, w).
	$$
\end{definition}

\section{Computing fundamental domains}
\label{section:comp}
The aim of this section is to present an algorithm that allows us to compute a fundamental domain for a given crystallographic group. 
Our algorithm requires as input a point $u\in \R^n$ in general position for a given crystallographic group. This point is then used to construct a Dirichlet cell forming a fundamental domain for $\Gamma.$
The details can be found in Algorithm~\ref{alg:dirichletcell}. In order to achieve this goal, we need some preparation.  
We start by examining points in special and general position for $\Gamma$.
First, since a fundamental domain $F$ contains a system of representatives $V$ satisfying $F^{\circ}\subseteq V \subseteq F$, we obtain the following lemma as a direct consequence.

\begin{lemma}\label{lma:translation-moves-interior}
	Let $\Gamma \leq \E(n)$ be a crystallographic group, $F$ a fundamental domain for $\Gamma$ and $v \in F^\circ$. If a non-trivial $\alpha \in \Gamma$ satisfies $v^\alpha \neq v$, then $v^\alpha \notin F^\circ$.
\end{lemma}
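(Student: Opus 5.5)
We argue directly from the definition of a fundamental domain, by contradiction. Fix a system of representatives $V$ of the $\Gamma$-orbits on $\R^n$ with $F^\circ \subseteq V \subseteq F$; such a $V$ exists by condition (ii) in the definition of a fundamental domain. Suppose, contrary to the claim, that $v^\alpha \in F^\circ$ for some $\alpha\in\Gamma$ with $v^\alpha \neq v$.

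Since $v\in F^\circ$, we have $v \in V$. Likewise $v^\alpha\in F^\circ\subseteq V$. The points $v$ and $v^\alpha$ lie in the same $\Gamma$-orbit, namely $v^\Gamma$. A system of representatives contains exactly one point from each orbit, so $v = v^\alpha$. This contradicts the assumption $v^\alpha \neq v$, and hence $v^\alpha\notin F^\circ$.

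No step here is a real obstacle. The only point to check is that $V$ is a genuine system of representatives, meaning it meets each orbit in exactly one point, so that two distinct elements of $V$ cannot lie in a common orbit. The hypothesis that $\alpha$ is non-trivial is not needed, since the condition $v^\alpha\neq v$ already excludes $\alpha=\Id$. Note also that the argument only uses $F^\circ\subseteq V$ and never uses compactness or discreteness of $\Gamma$.
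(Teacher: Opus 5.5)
Your proof is correct and is essentially the paper's argument: the paper gives no separate proof and simply calls the lemma a direct consequence of the existence of a system of representatives $V$ with $F^\circ \subseteq V \subseteq F$. Your write-up spells out exactly that consequence: two distinct points of $F^\circ \subseteq V$ cannot share a $\Gamma$-orbit.
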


\begin{proposition}\label{prop:special-pos-inside-fund-dom}
	Let $\Gamma\leq \E(n)$ be a crystallographic group, $F \subset \R^n$ a fundamental domain for $\Gamma$ and $v\in F$. If $v$ is a point in special position for $\Gamma$, then $v \in \partial F$.
\end{proposition}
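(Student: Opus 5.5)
We argue by contradiction. Suppose $v \in F$ is in special position for $\Gamma$ but $v \notin \partial F$. Since $v \in F$, this forces $v \in F^\circ$. By assumption there is a non-trivial $\gamma \in \stab_\Gamma(v)$.

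The plan is to perturb $v$ to a nearby interior point that $\gamma$ actually moves, and then apply \Cref{lma:translation-moves-interior} to $\gamma$. The key input is that $\gamma \neq \Id$ is an isometry, so its fixed-point set is a proper affine subspace of $\R^n$. Indeed, $\{x \mid x\gamma_o + \gamma_t = x\}$ is the solution set of an affine linear system. It is either empty (impossible here, since it contains $v$) or a translate of $\ker(\gamma_o - \Id)$. This subspace is proper: if it were all of $\R^n$, then $\gamma_o = \Id$ and $\gamma_t = 0$, so $\gamma = \Id$. A proper affine subspace has empty interior.

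Now choose $r>0$ with $B_r(v) \subseteq F^\circ$, which is possible because $F^\circ$ is open. Since $B_r(v)$ is not contained in the fixed-point set of $\gamma$, there is some $w \in B_r(v)$ with $w^\gamma \neq w$. We must also check that $w^\gamma$ stays inside the ball. Because $\gamma$ is an isometry fixing $v$,
\[
d(w^\gamma, v) = d(w^\gamma, v^\gamma) = d(w, v) < r ,
\]
so $w^\gamma \in B_r(v) \subseteq F^\circ$.

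We now have $w \in F^\circ$, a non-trivial $\gamma \in \Gamma$ with $w^\gamma \neq w$, and $w^\gamma \in F^\circ$. This contradicts \Cref{lma:translation-moves-interior}. Hence $v \notin F^\circ$, and therefore $v \in F \setminus F^\circ \subseteq \partial F$.

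The only step needing care is the claim that the fixed-point set of a non-identity isometry has empty interior. It follows directly from the affine description of $\gamma$ via $\Psi$ given in the preliminaries, so I do not expect a real obstacle. The rest follows immediately from \Cref{lma:translation-moves-interior}.
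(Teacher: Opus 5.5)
Your proof is correct and follows the same approach as the paper's. Both argue by contradiction: take a ball around $v$ inside $F^\circ$, find a point $w$ in it that the stabilising element moves, use the isometry property to keep $w^\gamma$ in the ball, and contradict Lemma~\ref{lma:translation-moves-interior}. The only difference is cosmetic: you get the moved point from the fixed-point set of $\gamma$ being a proper affine subspace, while the paper tests the points $v+\tfrac{\delta}{2}e_i$ to force $\alpha_o=\Id$.
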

\begin{proof}
We prove the above statement by contradiction and therefore assume that $v$ is a point in special position satisfying $v\in F^{\circ}.$
	These two conditions imply that there exists a non-trivial $\alpha=(\alpha_o,\alpha_t) \in \stab_\Gamma(v)$ and $\delta \in \R_{>0}$ such that $v^\alpha=v$ and $B_\delta(v) \subset F^\circ$.
	We claim that there is a point $w \in B_\delta(v)$ such that $w^\alpha \neq w$. In order to show this, we assume that $w^\alpha = w$ holds for all $w \in B_\delta(v)$. Thus, for $1\leq i \leq n$ we define the point $w_i$ as $w_i \coloneqq v + \frac{\delta}{2} e_i$, where $e_i$ is the $i$-th standard basis vector.
	Since $d(w_i, v) < \delta$ for all $1 \leq i \leq n,$ the points $w_1,\ldots,w_n$ are all contained in $B_\delta(v)$. That means $w_i^\alpha = w_i$ by our assumption. 
    Thus, we obtain
 \begin{align*}
        v+\tfrac{\delta}{2}e_i &= w_i = w_i^\alpha = \left(v+\tfrac{\delta}{2}e_i \right) \cdot \alpha_o + \alpha_t = (v\cdot{\alpha_o} + \alpha_t)  + \tfrac{\delta}{2}e_i\cdot \alpha_o =v^\alpha + \tfrac{\delta}{2}e_i\cdot \alpha_o = v +\tfrac{\delta}{2}e_i\cdot \alpha_o 
    \end{align*}
     for all $1 \leq i \leq n$. Thus, $e_i=e_i\cdot \alpha_o.$
    Since $e_1,\ldots,e_n$ is a basis of $\mathbb{R}^n$, the orthogonal component of $\alpha$ satisfies $\alpha_o = \Id$. This implies $\alpha \in \T(\Gamma)$.
	Now, $\alpha$ is a non-trivial translation, hence we get $w^\alpha\neq w$ and therefore $w^\alpha \notin F^\circ$ by Lemma~\ref{lma:translation-moves-interior}. This results in $w^\alpha \notin B_{\delta}(v)$ which contradicts $w^\alpha = w$ for all $w \in B_\delta(v)$. 
	Thus, there exists a $w \in B_\delta(v) \setminus \{ v \}$ such that $w^\alpha \neq w$.
	Since $\alpha$ is an isometry of $\R^n,$ we conclude that $d(v, w^\alpha)=  d(v^\alpha, w^\alpha)  =d(v,w) < \delta$ i.e.\ $w^\alpha \in B_{\delta}(v) \subset F^\circ$ holds.
	Hence, two elements in the $\Gamma$-orbit of $w$ are contained in $F^\circ$, namely $w^\alpha$ and $w$. This contradicts $F$ being a fundamental domain for $\Gamma$.\\
\end{proof}

Next, we establish a correspondence between the Dirichlet cells of points in general position that lie in the same $\Gamma$-orbit.
\begin{lemma}\label{lma:dirichlet-identity}
	Let $\Gamma \leq \E(n)$ be a crystallographic group. If $u \in \R^n$ is a point in general position for $\Gamma$, then $D(u^\gamma, u^\Gamma) = D(u, u^\Gamma)^\gamma$ for all $\gamma \in \Gamma$.
\end{lemma}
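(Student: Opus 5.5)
The plan is to show the two inclusions at once by proving that $v \mapsto v^\gamma$ maps $D(u,u^\Gamma)$ onto $D(u^\gamma,u^\Gamma)$. We rely only on two facts: $\gamma$ is an isometry, and right multiplication by $\gamma$ permutes $\Gamma$. General position is needed only to ensure that the excluded point in \Cref{def:dirichlet-cell} is handled consistently. Since $\stab_\Gamma(u)$ is trivial, the map $\Gamma \to u^\Gamma$, $\alpha \mapsto u^\alpha$, is a bijection. The orbit $u^\Gamma$ is discrete because $\Gamma$ is crystallographic, so both Dirichlet cells are well defined, and $u^\gamma \in u^\Gamma$.

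First, I will rewrite $u^\Gamma$ as $\{u^{\gamma\alpha} \mid \alpha\in\Gamma\}$, using that $\alpha\mapsto\gamma\alpha$ is a bijection of $\Gamma$. With this parametrisation, $w = u^{\alpha}$ ranges over $u^\Gamma\setminus\{u\}$ exactly when $w^\gamma = u^{\alpha\gamma}$ ranges over $u^\Gamma\setminus\{u^\gamma\}$. The condition $u^{\alpha} \neq u$ is equivalent to $u^{\alpha\gamma}\neq u^\gamma$ because $\gamma$ is a bijection of $\R^n$. Hence
\[
(u^\Gamma\setminus\{u\})^\gamma = u^\Gamma\setminus\{u^\gamma\}.
\]
Strictly speaking this identity does not need general position, since $(u^\Gamma)^\gamma = u^{\Gamma\gamma}=u^\Gamma$ holds in any case; I will note that it holds regardless.

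Second, I will take $v\in D(u,u^\Gamma)$ and an arbitrary $w' \in u^\Gamma\setminus\{u^\gamma\}$. Writing $w' = w^\gamma$ with $w\in u^\Gamma\setminus\{u\}$, the isometry property gives
\[
d(u^\gamma,v^\gamma)=d(u,v)\le d(v,w)=d(v^\gamma,w^\gamma)=d(v^\gamma,w').
\]
So $v^\gamma\in D(u^\gamma,u^\Gamma)$, which proves $D(u,u^\Gamma)^\gamma\subseteq D(u^\gamma,u^\Gamma)$.

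Third, for the reverse inclusion I will apply the same argument with $u^\gamma$ in place of $u$ and $\gamma^{-1}$ in place of $\gamma$. The point $u^\gamma$ is also in general position, since its stabiliser is $\gamma^{-1}\stab_\Gamma(u)\gamma$, which is trivial. This gives $D(u^\gamma,u^\Gamma)^{\gamma^{-1}}\subseteq D(u,u^\Gamma)$, and applying $\gamma$ yields equality.

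There is no real obstacle here. The only point requiring care is the bookkeeping of the right action, namely checking that $u^{\alpha\gamma}$ runs over the whole orbit, and confirming that the excluded point $u$ corresponds exactly to $u^\gamma$.
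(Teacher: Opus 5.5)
Your proof is correct and follows essentially the same route as the paper: both inclusions come from $\gamma$ being an isometry together with $(u^\Gamma)^\gamma = u^\Gamma$. The only difference is that you get $D(u^\gamma,u^\Gamma)\subseteq D(u,u^\Gamma)^\gamma$ by applying the forward inclusion to $u^\gamma$ and $\gamma^{-1}$, whereas the paper redoes the distance computation directly with $\tilde v = v^{\gamma^{-1}}$; your remark that general position is never actually needed is also right, and the reparametrisation that matches your right action is $\alpha\mapsto\alpha\gamma$ rather than $\alpha\mapsto\gamma\alpha$, though both give the same set $u^\Gamma$.
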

\begin{proof}
Let $\gamma \in \Gamma$ be an isometry of $\mathbb{R}^n$.
We start this proof by showing $D(u^\gamma,u^\Gamma)\subseteq D(u,u^\Gamma)^\gamma.$ Thus, let $v\in \mathbb{R}^n$ be a point in $ D(u^\gamma, u^\Gamma)$. That means that $d(u^\gamma, v) \leq d(w,v)$ for all $w \in u^\Gamma$.
	We define $\tilde{v} \coloneqq v^{\gamma^{-1}}$. Since $\gamma$ is an isometry, we obtain 
	\begin{align*}
	 d(u, \tilde{v}) = d(u^\gamma, \tilde{v}^\gamma)= d(u^\gamma, (v^{\gamma^{-1}})^\gamma) = d(u^\gamma, v)
		\leq  d(w, v) = d(w^{\gamma^{-1}}, v^{\gamma^{-1}}) = d(w^{\gamma^{-1}}, \tilde{v}).
	\end{align*}
    for all $w \in u^\Gamma$. Note, the above inequality holds for all $w^{\gamma^{-1}} \in u^\Gamma$. Thus $\tilde{v} \in D(u, u^\Gamma)$ which implies $v = (v^{\gamma^{-1}})^\gamma \in D(u, u^\Gamma)^\gamma$.
Now, let $v'$ be a point in $ D(u, u^\Gamma)^\gamma$. Hence, there is a $v \in D(u, u^\Gamma)$ such that $v^\gamma = v'$. As described in the above case, we can use the isometry $\gamma$ to conclude:
	$$
d(u^\gamma, v') = d(u^\gamma, v^\gamma) = d(u, v) \leq d(w, v) = d(w^\gamma, v^\gamma) = d(w^\gamma, v')
	$$
for all $w \in u^\Gamma$. 
	Since the above inequality holds for all $w^\gamma \in u^\Gamma$, we get $v' \in D(u^\gamma, u^\Gamma)$.
\end{proof}

Now, we are able to state \Cref{thm:dirichlet-is-fund-dom}, which forms the basis of our proposed algorithm. This theorem establishes that the Dirichlet cell that is constructed for a point in general position for a crystallographic group forms a fundamental domain for the given group. This result can already be found in \cite{plesken1994}. In Appendix \ref{apx:thm:dirichlet-is-fund-dom}, we give a proof of this statement for completeness.

\begin{theorem}{\cite[Thm. III.11 (ii)]{plesken1994}}\label{thm:dirichlet-is-fund-dom}
	Let $\Gamma \leq \E(n)$ be a crystallographic group and $u \in \R^n$ a point in general position for $\Gamma$.
	Then the Dirichlet cell $D(u, u^\Gamma)$ is a fundamental domain for $\Gamma$.
\end{theorem}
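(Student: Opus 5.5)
Write $D:=D(u,u^\Gamma)$, $O:=u^\Gamma$. Since $u$ is in general position, the map $\gamma\mapsto u^\gamma$ is a bijection $\Gamma\to O$. We verify the two defining properties of a fundamental domain: (i) the translates $D^\gamma$ cover $\R^n$, and (ii) there is a system of representatives $V$ with $D^\circ\subseteq V\subseteq D$.

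\textbf{Step 1: Covering.} Fix $v\in\R^n$. The orbit $O$ is discrete, and in fact closed and locally finite: every ball meets only finitely many points of $O$. This follows from discreteness of $\Gamma$ together with the fact that $\L(\Gamma)$ is a full lattice, since $O$ is a finite union of translates $u^{\gamma_i}+\L(\Gamma)$ over coset representatives $\gamma_i$ of the finite group $\P(\Gamma)$. Hence the distance $d(v,w)$, $w\in O$, attains a minimum at some $u^\gamma$. Then $v\in D(u^\gamma,O)=D^\gamma$ by Lemma~\ref{lma:dirichlet-identity}, so $\bigcup_\gamma D^\gamma=\R^n$.

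\textbf{Step 2: Interiors are disjoint, and no two interior points are equivalent.} First, $D^\circ=\{v: d(u,v)<d(w,v)\ \forall w\in O\setminus\{u\}\}$. The inclusion $\supseteq$ holds because, by local finiteness, only finitely many $w$ matter near $v$, so the strict inequalities persist on a neighbourhood. For $\subseteq$: if $d(u,v)=d(w,v)$ for some $w\neq u$, then $v$ lies on the bounding hyperplane of $H^+(u,w)$, and points slightly beyond it in the direction of $w$ lie outside $D$, so $v\notin D^\circ$.

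Now suppose $v,v^\gamma\in D^\circ$ with $\gamma\neq\Id$. The condition $v\in D^\circ$ gives $d(u,v)<d(u^{\gamma^{-1}},v)=d(u^{\gamma^{-1}\gamma},v^\gamma)=d(u,v^\gamma)$. The condition $v^\gamma\in D^\circ$ gives $d(u,v^\gamma)<d(u^\gamma,v^\gamma)=d(u,v)$. These two strict inequalities contradict each other. Here general position is essential: it guarantees $u^\gamma\neq u$ and $u^{\gamma^{-1}}\neq u$, so both strict inequalities are legitimate.

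\textbf{Step 3: Constructing $V$.} Define $V:=\bigcup_{x\in D}\{x\}$ after choosing, for each orbit meeting $D$, one representative in $D$. Take the orbit's point in $D^\circ$ if one exists; by Step 2 it is unique. By Step 1 every orbit meets $D$, so $V$ is a system of representatives. Moreover $V\subseteq D$ and $D^\circ\subseteq V$, since each interior point is the unique chosen representative of its own orbit. Compactness of $D$ is not required by the definition, although it follows from $D$ being contained in a bounded region determined by the lattice.

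The main obstacle is the local finiteness of $O$ in Step 1, together with the characterization of $D^\circ$ that depends on it. Discreteness of orbits alone does not immediately give attainment of the minimum, so I would derive local finiteness explicitly from the lattice decomposition $O=\bigcup_i\bigl(u^{\gamma_i}+\L(\Gamma)\bigr)$ over finitely many cosets.
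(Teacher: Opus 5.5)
Your proof is correct and follows essentially the same route as the paper's: you use a nearest orbit point together with Lemma~\ref{lma:dirichlet-identity} for the covering, and the strict-inequality description of $D^\circ$ to rule out two $\Gamma$-equivalent interior points, while supplying details the paper leaves implicit (attainment of the minimum via local finiteness of $u^\Gamma$, and an explicit construction of $V$). One cosmetic fix is needed: $\bigcup_{x\in D}\{x\}$ is just $D$, so define $V$ directly as the set of chosen representatives.
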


With Theorem \ref{thm:dirichlet-is-fund-dom} in place, our approach for computing Dirichlet cells in the given setting is now justified. To make practical use of this result, we have to derive a formulation of this statement that is computationally applicable. First, we have to argue that only finitely many points in $u^\Gamma$ are required to construct $D(u,u^\Gamma)$ if $u$ is in general position for $\Gamma\leq \E(n)$. Additionally, we have to show that these points arise from the action of elements in $\Gamma$ that can be written as words of bounded length with respect to an appropriate generating set. The choice of such a generating set is detailed below.
\begin{remark}\label{not:cryst-grp-cosets}
Let $\Gamma \leq \E(n)$ be a crystallographic group. We know that there exist index sets $I,K$ and a generating set $S=\{\rho_i\mid i \in I\}\cup \{\tau_k\mid k \in K\}$ of $\Gamma$ such that 
\begin{enumerate}[label=(\arabic*)]
    \item $\tau_k \in \T(\Gamma)$ for all $k \in K$,
    \item $\{ (\tau_k)_t \mid k \in K \}$ forms a basis of the lattice $\mathcal{L}(\Gamma)$ and
    \item $\{ \rho_i\mid i\in I\}$ is a set of representatives for the cosets of $\T(\Gamma)$ in $\Gamma$, whence 
        $$
            \Gamma = \bigcup_{i \in I} \rho_i \T(\Gamma).
        $$
\end{enumerate}
 
From now on, we rely heavily on a generating set $S$ of $\Gamma$ being chosen to satisfy these conditions. Note, since $\mathcal{L}(\Gamma)$ is a full lattice of dimension $n$, the set $K$ can be chosen as $K:=\{1,\ldots,n\}$. If $\Gamma$ is a symmomorphic crystallographic group we choose the $\rho_i$ such that they form a generating set of $\P(\Gamma)$.
\end{remark}

With a generating set $S$ as described above, we can show that the translations in $\Gamma$ induced by points with a bounded norm can be written as words in $S$ of bounded length. The length can be bounded by vectors related to the lattice basis.

\begin{remark}
Let $\Gamma\leq \E(n)$ be a crystallographic group and $S=\{\rho_i\mid i \in I\}\cup \{\tau_k\mid k \in K\}$ a generating set of $\Gamma$ as described in \Cref{not:cryst-grp-cosets}. Let $T\in \R^{n\times n}$ denote the \emph{lattice matrix} of ${\mathcal L}(\Gamma)$, that is the matrix whose $k$-th column is ${(\tau_k})_t$ for $k\in \{1,\ldots, n\}$. The reciprocal lattice 
${\mathcal L}(\Gamma)^\ast$ consists of those vectors $w\in \R^n$ for which $w\cdot v \in \Z$ for all $v\in {\mathcal L}(\Gamma)$ and has a lattice matrix $T^{-t}.$ We call the columns of $T^{-t}$ the \emph{reciprocal basis vectors} of the basis $T$. 
 \end{remark}

If we define $\Vert C\Vert_{2,1} := \sum_{i=1}^n \Vert c_i\Vert$ for a matrix $C\in \R^{n\times n}$ with rows $c_1, \ldots, c_n,$ then we obtain the following result.

\begin{lemma}\label{lma:translation-short-word}
Let $\Gamma\leq \E(n)$ be a crystallographic group and $S=\{\rho_i\mid i \in I\}\cup \{\tau_k\mid k \in K\}$ a generating set of $\Gamma$ as described in \Cref{not:cryst-grp-cosets}. Furthermore, let $\delta\in \R_{>0}$ be a positive real number.
	Then there exists an $A(\delta) \in \N $ such that every $\tau \in \T(\Gamma)$ satisfying $\Vert(\tau)_t\Vert \leq \delta$ can be expressed as a word in $\{ \tau_k \mid k \in K \}$ of length at most $A(\delta)$. Moreover, $A(\delta) \le  \Vert B \Vert_{2,1} \cdot \delta$, where $B\in \R^{n\times n}$ is the 
     corresponding reciprocal matrix of $\mathcal{L}(\Gamma)$. 
\end{lemma}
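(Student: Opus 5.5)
Since the translation subgroup $\T(\Gamma)$ is abelian and isomorphic to $\L(\Gamma)$ via $\tau \mapsto \tau_t$, I would reduce everything to integer coordinates in the lattice basis. Let $\tau\in\T(\Gamma)$ with $\Vert \tau_t\Vert\le\delta$. By condition (2) of \Cref{not:cryst-grp-cosets}, there are unique integers $m_1,\ldots,m_n$ with $\tau_t=\sum_{k=1}^n m_k (\tau_k)_t$, i.e.\ $\tau_t = T m$ with $m=(m_1,\ldots,m_n)^t\in\Z^n$. Because translations commute and composition of translations adds translational components, $\tau=\tau_1^{m_1}\cdots\tau_n^{m_n}$. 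This is a word in $\{\tau_k\mid k\in K\}$ (allowing inverses) of length $\sum_k |m_k|$. Hence it suffices to bound $\sum_k|m_k|$ uniformly in terms of $\delta$.

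To get the bound, I would use the reciprocal basis. Write $m=T^{-1}\tau_t$, so $m_k = r_k\cdot \tau_t$, where $r_k$ is the $k$-th row of $T^{-1}$. Equivalently, $r_k$ is the $k$-th column of $B=T^{-t}$, the $k$-th reciprocal basis vector, characterised by $r_k\cdot(\tau_j)_t=\delta_{kj}$. Cauchy--Schwarz then gives $|m_k|\le \Vert r_k\Vert\,\Vert\tau_t\Vert\le \Vert r_k\Vert\,\delta$. Summing over $k$ yields
\begin{equation*}
\sum_{k=1}^n |m_k| \le \delta\sum_{k=1}^n \Vert r_k\Vert .
\end{equation*}
The right-hand side is $\delta$ times the sum of the norms of the reciprocal basis vectors, i.e.\ $\Vert \cdot\Vert_{2,1}$ applied to the appropriate orientation of $B$.

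Finally, I would set $A(\delta):=\lfloor \Vert B\Vert_{2,1}\,\delta\rfloor\in\N$. Since the word length $\sum_k|m_k|$ is an integer bounded by $\Vert B\Vert_{2,1}\delta$, it is also bounded by the floor, which gives both claims at once.

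The only delicate point is bookkeeping rather than mathematics. One must check that the rows used in $\Vert\cdot\Vert_{2,1}$ are exactly the reciprocal vectors $r_k$, i.e.\ the rows of $T^{-1}$, which are the columns of $B=T^{-t}$. If the definition is read with rows of $B$, I would interpret $B$ as the matrix whose rows are the reciprocal basis vectors. One must also keep track of the right-action convention, $u^{\phi}=u\phi_o+\phi_t$, to confirm that $\tau_t=\sum m_k(\tau_k)_t$ indeed corresponds to the product $\tau_1^{m_1}\cdots\tau_n^{m_n}$. This holds because for pure translations the orthogonal parts are the identity.
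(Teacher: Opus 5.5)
Your proof is correct and follows the paper's argument: you write $\tau=\prod_k\tau_k^{a_k}$, obtain the coefficient vector by applying the inverse lattice matrix to $\tau_t$, and bound each coordinate by Cauchy--Schwarz against the corresponding reciprocal basis vector. Your handling of the orientation of $B$ is also right: the paper's identity $a=B\cdot(\tau)_t$ holds only if $B=T^{-1}$, i.e.\ if the rows of $B$ are the reciprocal basis vectors, which is exactly the reading you adopt (with the literal choice $B=T^{-t}$ and row norms, the inequality can fail for skewed bases).
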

\begin{proof}
By \Cref{not:cryst-grp-cosets}, $\{\tau_k\mid k \in K\}$ generates the translation normal subgroup $\mathcal{T}(\Gamma)$. Further, we know that $\mathcal{T}(\Gamma)$ satisfies $\T(\Gamma) \cong \L(\Gamma)$. Since the ball $B_\delta(0)$ contains only finitely many lattice points in $\mathcal{L}(\Gamma)$, the set $M_{\delta}:=\{\tau \in \mathcal{T}(\Gamma)\mid \Vert(\tau)_t\Vert \leq \delta\}$ is finite.
Furthermore, for every  $\tau \in M_\delta$, there exists a set of coefficients $\{a_k \in \Z \mid k \in K \}$ such that 
    $$
        \tau = \prod_{k \in K} \tau_k^{a_k}.
    $$ 
Hence, every $\tau\in M_{\delta}$ can be written as a word in $\{\tau_k\mid k \in K\}$ of length at most $A(\delta),$ where $A(\delta)$ is defined as
\begin{equation}\label{eq:A(delta)}
    A(\delta) \coloneqq \max \left\{ \sum_{k \in K} | a_k| \, \middle| \, \tau = \prod_{k \in K} {\tau_k}^{a_k} \in M_{\delta} \right\}.
\end{equation}
For $\tau \in M_\delta$ with $\tau = \prod_{k \in K} \tau_k^{a_k}$ define $a := (a_1, \ldots, a_n)^T$. Then $a = B \cdot (\tau)_t$ where $B$ is the reciprocal lattice matrix. If $b_i$ denotes the $i$-th row of $B$, we find by the Cauchy-Schwarz inequality:
    \[ \Vert a\Vert_1 = \sum_{k=1}^n |a_k| \le \sum_{k=1}^n \Vert b_k\Vert \cdot \Vert \tau_t \Vert = \Vert B\Vert_{2,1} \cdot \delta.\]
Therefore $A(\delta) \le \Vert B\Vert_{2,1} \cdot \delta. $
\end{proof}

 Now, we exploit this result and approach the word problem for arbitrary isometries. For this, we give the following remark which introduces a new notation and recalls an identity for the Dirichlet cell of a point in general position from  \cite[Satz III.2]{plesken1994}.

\begin{remark}\label{rmk:dirichlet-finite-intersection}
Let $\Gamma\leq \E(n)$ be a crystallographic group, $S=\{\rho_i\mid i \in I\}\cup \{\tau_k\mid k \in K\}$ a generating set for $\Gamma$ as described in \Cref{not:cryst-grp-cosets} and $u \in \R^n$ a point in general position for $\Gamma$. Since the fundamental domain $D(u, u^\Gamma)$ is a compact set in $\R^n$, there exists a real number $r\in \mathbb{R}_{>0}$ such that $D(u, u^\Gamma) \subseteq B_r(u)$.

For $d>0$  define 
$
W_d(u) \coloneqq \left( u^\Gamma \cap B_{d}(u) \right) \setminus \{ u \}.
$	
We obtain the following equality for $d=2r$:
$$
	D(u, u^\Gamma) = \bigcap_{w \in W_{2r}(u)} H^+(u, w).
$$
\end{remark}

Hence, if $r$ is given as in the above remark, then we have to find all elements in the set $W_{2r}(u)$ to compute a Dirichlet cell of a crystallographic group $\Gamma$. 

Note that $D(u, u^{\Gamma}) \subseteq D(u, u^{\T(\Gamma)})$ as $\T(\Gamma) \leq \Gamma$. The covering radius $r$ of a lattice is defined as the smallest radius $r>0$ such that balls of radius $r$ centred at all lattice points cover the entire space $\mathbb{R}^n$. Therefore, if $r$ is the covering radius of the lattice induced by $\T(\Gamma)$ it follows that $D(u, u^{\Gamma})$ is contained in a ball of radius $r$ around $u$.  
Combining this with the inequality from \cite[Thm. 7.9]{micciancio2002} we obtain the following upper bound:
$$
    r \leq \frac{\sqrt{n}}{2} \max_{k \in K} || \tau_k||.
$$

\begin{remark}\label{rem:epsilon}
Let $\Gamma\leq \E(n)$ be a symmomorphic crystallographic group, $S=\{\rho_i\mid i \in I\}\cup \{\tau_k\mid k \in K\}$ a generating set for $\Gamma$ as described in \Cref{not:cryst-grp-cosets} and $u \in \R^n$ a point in general position for $\Gamma$.
Furthermore, define $\epsilon \coloneqq \max_{i \in I} \Vert u^{\rho_i} - u\Vert$.
Then 
\[\epsilon =\max_{i \in I} \Vert u^{\rho_i} - u\Vert
= \max_{i \in I} \Vert {\rho_i} - Id\Vert \cdot \Vert u \Vert \le \max_{i \in I} (\Vert \rho_i\Vert + \Vert Id \Vert)\cdot \Vert u\Vert =  2 \cdot \Vert u \Vert,
\]
as the eigenvalues of $\rho_i$ are $\pm 1$, where $\Vert A\Vert$ for a matrix $A\in \mathbb{R}^{n\times n}$ is the matrix norm induced by the Euclidean norm on $\mathbb{R}^n$. 
\end{remark}

\begin{definition}\label{def:w-hat}
    Let $\Gamma\leq \E(n)$ be a crystallographic group, $S=\{\rho_i\mid i \in I\}\cup \{\tau_k\mid k \in K\}$ a generating set for $\Gamma$ as described in \Cref{not:cryst-grp-cosets} and $u \in \R^n$. For $\ell>0$ we define the set $\widehat{W}_{\ell}(u)$ as
	$$
		\widehat{W}_{\ell}(u) \coloneqq\{ u^\gamma \mid \gamma = \rho_i \tau , i \in I , \Vert \tau_t \Vert \leq \ell \}.
	$$
\end{definition}

Before concluding the main result, we need an additional lemma.

\begin{lemma}\label{lma:word-length-dist}
Let $\Gamma\leq \E(n)$ be a crystallographic group, $S=\{\rho_i\mid i \in I\}\cup \{\tau_k\mid k \in K\}$ a generating set for $\Gamma$ as described in \Cref{not:cryst-grp-cosets} and $u \in \R^n$ a point in general position for $\Gamma$.
Furthermore, let $d \in \R_{>0}$ be a positive real number and define $\epsilon \coloneqq \max_{i \in I} \Vert u^{\rho_i} - u\Vert$.
Then the set $W_d(u)$ (see \Cref{rmk:dirichlet-finite-intersection}) satisfies $W_d(u)  \subseteq \widehat{W}_{d+ \epsilon}(u)$.
\end{lemma}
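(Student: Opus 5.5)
The plan is to take an arbitrary point of $W_d(u)$, write the group element producing it using the coset decomposition from \Cref{not:cryst-grp-cosets}, and bound the norm of the translational part with the triangle inequality. General position of $u$ should not actually be needed; it only matters elsewhere, so that distinct group elements give distinct orbit points.

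\textbf{Step 1 (coset decomposition).} Let $w \in W_d(u)$. Then $w = u^\gamma$ for some $\gamma \in \Gamma$, and $d(u,w) < d$. By condition (3) of \Cref{not:cryst-grp-cosets} we have $\Gamma = \bigcup_{i\in I} \rho_i \T(\Gamma)$. Hence there are an $i \in I$ and a $\tau \in \T(\Gamma)$ with $\gamma = \rho_i \tau$.

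\textbf{Step 2 (compute the action).} The paper uses exponential, i.e.\ right, action notation. So
$$u^\gamma = (u^{\rho_i})^{\tau} = u^{\rho_i}\cdot \Id + \tau_t = u^{\rho_i} + \tau_t,$$
which gives $\tau_t = w - u^{\rho_i}$. This is the only point where care is needed: we must check that the order of composition in $\rho_i\tau$ matches the right action, so that the translation is applied last and appears as a plain additive shift. If the cosets had been written the other way, one would instead use normality of $\T(\Gamma)$ to rewrite $\tau\rho_i = \rho_i \tau'$ with $\tau' = \rho_i^{-1}\tau\rho_i \in \T(\Gamma)$.

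\textbf{Step 3 (triangle inequality).} Writing $w - u^{\rho_i} = (w - u) + (u - u^{\rho_i})$, we get
$$\Vert \tau_t\Vert \le \Vert w-u\Vert + \Vert u^{\rho_i}-u\Vert < d + \epsilon,$$
using the definition $\epsilon = \max_{i\in I}\Vert u^{\rho_i}-u\Vert$. Therefore $w = u^{\rho_i\tau}$ with $\Vert\tau_t\Vert \le d+\epsilon$, that is, $w \in \widehat{W}_{d+\epsilon}(u)$ by \Cref{def:w-hat}. Since $w \in W_d(u)$ was arbitrary, this proves $W_d(u) \subseteq \widehat{W}_{d+\epsilon}(u)$.

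No real obstacle is expected. The only thing to watch is the composition convention in Step 2.
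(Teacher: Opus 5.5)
Your proof is correct and follows essentially the paper's argument: both write $\gamma=\rho_i\tau$, use $u^{\rho_i\tau}=u^{\rho_i}+\tau_t$, and apply the triangle inequality with $\epsilon$. The only difference is that the paper argues by contradiction (assuming $\Vert\tau_t\Vert>d+\epsilon$ and deriving $\Vert w-u\Vert>d$), whereas you apply the same inequality directly to get $\Vert\tau_t\Vert<d+\epsilon$.
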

\begin{proof}
	Let $w\in W_d(u)$, that is there exists an element $\gamma\in \Gamma$ such that $w = u^\gamma$ and $\Vert w - u \Vert < d$. Since $S$ is a generating set as described in \Cref{not:cryst-grp-cosets}, we can write $\gamma$ as $\gamma = \rho_i \tau$ for an $i \in I$ and $\tau \in \T(\Gamma)$. We  prove the above statement by contradiction. 
    Hence, we assume $w \notin \widehat{W}_{d+ \epsilon}(u)$, i.e.\ $\Vert\tau_t\Vert > d+\epsilon$. By using the inequality $\Vert \tau_t \Vert > d + \epsilon > \epsilon \geq  \Vert u^{\rho_i} - u \Vert$, we obtain
	\begin{align*}
		\Vert w - u \Vert 	 = \Vert u^{\rho_i} + \tau_t - u \Vert
					\geq \left| \Vert \tau_t \Vert - \Vert u^{\rho_i} - u\Vert
                    \right|
                    = \Vert \tau_t \Vert - \Vert u^{\rho_i} - u\Vert \geq \Vert \tau_t \Vert - \epsilon>d.
	\end{align*} 
    This implies $w\notin W_d(u)$,
	 contradicting our choice of $w$.
\end{proof}

By combining \Cref{lma:translation-short-word} and \Cref{lma:word-length-dist} we are able to establish the desired upper bound of the word length.

\begin{theorem}\label{thm:dirichlet-word-length}
Let $\Gamma\leq \E(n)$ be a crystallographic group, $S=\{\rho_i\mid i \in I\}\cup \{\tau_k\mid k \in K\}$ a generating set of $\Gamma$ as described in \Cref{not:cryst-grp-cosets} and $u \in \R^n$ a point in general position for $\Gamma$.
Further, let $r\in \mathbb{R}_{>0}$ such that $D(u, u^\Gamma) \subseteq B_r(u)$. Then the Dirichlet cell $D(u, u^\Gamma)$ can be computed by considering the intersection of half-spaces $H^+(u,w)$ for words $w$ of length at most $A(2r+\epsilon)+1$ with $\epsilon \coloneqq \max_{i \in I} \Vert u^{\rho_i} - u\Vert$.
Moreover if $\Gamma$ is symmomorphic, then $A(2r+\epsilon)+1  \le  2\cdot  \Vert B \Vert_{2,1} \cdot \frac{\sqrt{n}}{2} \max_{k \in K} || \tau_k|| + 1,$ where $B\in \R^{n\times n}$ is the corresponding reciprocal matrix of the lattice. 
\end{theorem}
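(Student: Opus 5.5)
The plan is to chain \Cref{rmk:dirichlet-finite-intersection}, \Cref{lma:word-length-dist} and \Cref{lma:translation-short-word}. Since $D(u,u^\Gamma)\subseteq B_r(u)$, \Cref{rmk:dirichlet-finite-intersection} gives $D(u,u^\Gamma)=\bigcap_{w\in W_{2r}(u)}H^+(u,w)$. Adding further half-spaces $H^+(u,w)$ with $w\in u^\Gamma\setminus\{u\}$ cannot shrink the set below $D(u,u^\Gamma)$, because $D(u,u^\Gamma)$ is by definition the intersection over all of $u^\Gamma\setminus\{u\}$. Hence any finite set $M\subseteq\Gamma$ with $W_{2r}(u)\subseteq u^M$ and $u\notin u^M\setminus\{u\}$ issues yields $D(u,u^\Gamma)=D(u,u^M)$. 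Here we discard $u$ itself, which is harmless since $u$ is in general position.

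Next, apply \Cref{lma:word-length-dist} with $d=2r$ to get $W_{2r}(u)\subseteq\widehat W_{2r+\epsilon}(u)$. Thus every relevant point is $u^{\rho_i\tau}$ with $i\in I$ and $\tau\in\T(\Gamma)$ satisfying $\Vert\tau_t\Vert\le 2r+\epsilon$. By \Cref{lma:translation-short-word} with $\delta=2r+\epsilon$, such a $\tau$ is a word of length at most $A(2r+\epsilon)$ in the $\tau_k$. Prepending the single generator $\rho_i$ gives a word of length at most $A(2r+\epsilon)+1$ in $S$. So take $M=\{\rho_i\tau : i\in I,\ \Vert\tau_t\Vert\le 2r+\epsilon\}$. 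This set is finite, since only finitely many lattice points lie in a ball. This proves the first claim.

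For the symmomorphic bound, the bound of \Cref{lma:translation-short-word} gives $A(2r+\epsilon)\le\Vert B\Vert_{2,1}(2r+\epsilon)$, and it remains to control $2r+\epsilon$. For $r$ I would use the covering radius estimate $r\le\frac{\sqrt n}{2}\max_k\Vert\tau_k\Vert$ discussed after \Cref{rmk:dirichlet-finite-intersection}. For $\epsilon$ I would use \Cref{rem:epsilon}, where in the symmomorphic case the $\rho_i$ are linear (they fix the origin) and $\epsilon\le 2\Vert u\Vert$. The hardest step is that the stated bound $2\Vert B\Vert_{2,1}\frac{\sqrt n}{2}\max_k\Vert\tau_k\Vert+1$ has no $\epsilon$ term. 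So I need $\epsilon$ absorbed, i.e.\ $\epsilon$ made negligible. I would first try to exploit freedom in choosing $u$: in the symmomorphic case the origin is fixed by all $\rho_i$, and general position points exist arbitrarily close to it, since special points form a measure-zero union of affine subspaces. Taking $\Vert u\Vert$ small makes $\epsilon$ small. Then the integer-valued quantity $A(2r+\epsilon)$ stays at most $\lfloor\Vert B\Vert_{2,1}\cdot 2r\rfloor$ once $\epsilon$ is below the gap to the next lattice-norm threshold. Equivalently, $M_\delta$ is locally constant from the right in $\delta$, so $A(2r+\epsilon)=A(2r)$ for small $\epsilon$.

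If that reading is not intended, the fallback is to state the bound with $2r+\epsilon$ in place of $2r$, i.e.\ $A(2r+\epsilon)+1\le\Vert B\Vert_{2,1}\bigl(\sqrt n\max_k\Vert\tau_k\Vert+2\Vert u\Vert\bigr)+1$. I would then note that the displayed form holds in the limit of $u$ near a point fixed by $\P(\Gamma)$.
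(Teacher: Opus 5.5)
Your argument for the main claim is the paper's argument: \Cref{rmk:dirichlet-finite-intersection} with $d=2r$, then \Cref{lma:word-length-dist} to pass to $\widehat W_{2r+\epsilon}(u)$, then \Cref{lma:translation-short-word} for $\tau$, plus one letter $\rho_i$. Your explicit remark that extra half-spaces $H^+(u,w)$ with $w\in u^\Gamma$ cannot cut below $D(u,u^\Gamma)$ makes explicit a step the paper uses without comment in $\bigcap_{w\in W_d(u)}H^+(u,w)=\bigcap_{w\in\widehat W_{d+\epsilon}(u)}H^+(u,w)$.

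For the symmomorphic bound there is a genuine gap, but it lies in the paper, not in your reasoning. The paper's proof writes $A(2r+\epsilon)+1\le A(2(\rho+\Vert u\Vert))+1\le 2\Vert B\Vert_{2,1}\frac{\sqrt n}{2}\max_k\Vert\tau_k\Vert+1$. That is, it uses $\epsilon\le 2\Vert u\Vert$ and the covering-radius estimate (the undefined $\rho$ evidently stands for the latter), and then simply drops $\Vert u\Vert$. The displayed inequality is false as stated, for two reasons.
\begin{itemize}
\item The hypothesis admits arbitrarily large $r$, while $A(\delta)\ge\lfloor\delta/\Vert(\tau_1)_t\Vert\rfloor$ grows without bound.
\item Even with $r$ just above the covering radius it fails. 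Take $p2$ with $\rho_2=\phi$ and $u=(10.4,10.3)$, so that $\epsilon=2\Vert u\Vert\approx 29$. The orbit point nearest to $u$ is $u^{\phi\tau_1^{21}\tau_2^{21}}=(10.6,10.7)$, and it defines a facet of $D(u,u^\Gamma)$. Every word in $S$ for this element has length at least $43$, whereas the displayed bound is $2\sqrt2+1<4$.
\end{itemize}
So no proof of the literal claim can exist.

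Your fallback $\Vert B\Vert_{2,1}(\sqrt n\max_k\Vert\tau_k\Vert+2\Vert u\Vert)+1$ is exactly what the paper's ingredients yield. Your option (a) is a correct repair that recovers the displayed constant: it takes a general-position $u$ close enough to the common fixed point $0$ of the $\rho_i$, and uses right local constancy of the integer-valued $A$. This is evidently what the authors intend; compare their later example with ``$u=0$'', a point that is in special position whenever $\P(\Gamma)$ is nontrivial.

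Present it as a modified theorem with these extra hypotheses, not as a proof of the given statement. Also make explicit that both of your versions replace the arbitrary admissible $r$ by one just above the covering radius $\mu$. It has to be just above, since $B_r(u)$ is open and $D(u,u^\Gamma)$ may reach distance $\mu$; your local-constancy observation absorbs this.
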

\begin{proof}
	Consider $\widehat{W}_{d+\epsilon}(u)$ as defined as in \Cref{def:w-hat}, and define $d: = 2r$.
	By Lemma \ref{lma:word-length-dist} the Dirichlet cell around the point $u$ is equal to the intersection:
	$$
		D(u, u^\Gamma) = \bigcap_{w \in W_d(u)} H^+(u,w) = \bigcap_{w \in \widehat{W}_{d+\epsilon}(u)} H^+(u, w).
	$$
    
    By the choice of our generating set $S$, we know that $w \in \widehat{W}_{d+\epsilon}(u)$ satisfies $w = u^\gamma$ for an element $\gamma = \rho_i \tau$ with $i \in I$ and $\Vert\tau\Vert\leq d+\epsilon$. 

    Since $\tau$ is a word in $\{ \tau_k \mid k \in K \}$ with $|| (\tau)_t || < d+\epsilon $ (see \Cref{lma:translation-short-word}), it follows that $\gamma=\rho_i\tau$ is a word in $S$ of length at most $A(d+\epsilon)+1.$

    If $\Gamma$ is symmorphic we can additionally use Lemma~\ref{lma:translation-short-word} and Remark~\ref{rem:epsilon} to obtain the following bound:

    \[A(2r+\epsilon)+1 \le A(2(\rho  +\Vert u\Vert)) + 1 \le 2\cdot  \Vert B \Vert_{2,1} \cdot \frac{\sqrt{n}}{2} \max_{k \in K} || \tau_k|| + 1.
    \]
    
\end{proof}

Hence, for a symmomorphic crystallographic group and a corresponding point in general position as described in the statement of the theorem above, it suffices to consider words of length at most $ 2\cdot  \Vert B \Vert_{2,1} \cdot \frac{\sqrt{n}}{2} \max_{k \in K} || \tau_k|| + 1$ in order to compute the corresponding Dirichlet cell forming a fundamental domain.
In the examples we have computed, however, the resulting upper bound $ 2\cdot  \Vert B \Vert_{2,1} \cdot \frac{\sqrt{n}}{2} \max_{k \in K} || \tau_k|| + 1$ is often larger than necessary. For instance, if we assume that for a given symmomorphic crystallographic group $\Gamma$ the point $u=0$ is in general position and the basis in dimension $n=3$ is the standard basis, then we obtain $ 2\cdot  \Vert B \Vert_{2,1} \cdot \frac{\sqrt{n}}{2} \max_{k \in K} || \tau_k|| + 1 \leq 7$. In all computed examples, no word length greater than $5$ was required, and in many cases a length of $4$ already sufficed. If e.g. the size of of a given generating set is $4$ we get a set of element to consider of size $4^7 = 16384$, however in practice considering $4^4 = 256$ elements was enough.

Thus, Dirichlet cells forming fundamental domains of $\Gamma$ can be computed by considering elements in $\Gamma$ expressible as words of bounded length in a suitable generating set. 

In order to optimise our proposed algorithm for the computation of fundamental domains, we add another ingredient. This is based on the following observation: 
Let $\Gamma\leq \E(n), S$ and $u$ be given as in \Cref{not:cryst-grp-cosets} and  $\epsilon \coloneqq \max_{i \in I} \Vert u^{\rho_i} - u\Vert$ as described in \Cref{lma:word-length-dist}. We then choose a positive real number $r_1$ such that the set 
$$
	D_1 = \bigcap_{w \in \widehat{W}_{2r_1}(\epsilon)} H^+(u, w).
$$ 
is a convex polytope in $\mathbb{R}^n$, see \Cref{lma:word-length-dist} for the definition of the set $\widehat{W}_{2r_1}(\epsilon)$. It is easy to see that $D_1$ satisfies $D(u,u^\Gamma)\subseteq D_1.$ 
However, in order to know if this inclusion is already an equality we give a volume argument. 
In particular, we will see in the following that every fundamental domain for $\Gamma$ has the same volume (\Cref{thm:fund-dom-vol}). 
Here, the \emph{volume} $\vol(B)$ of a set $B\subseteq \R^n$ is defined as the Lebesgue measure of $B$.
Hence, if $D_1$ satisfies $\vol(D(u,u^\Gamma))< \vol(D_1)$, we have to choose a positive real number $r_2>r_1$ and construct the polytope $D_2$ given by
$$
	D_2 \coloneqq \bigcap_{w \in \widehat{W}_{2r_2}(\epsilon)} H^+(u, w).
$$
This process can be iterated. Hence, we will obtain radii $r_1< r_2 < r_3< \ldots$ such that for every $i \in \mathbb{N}$ the positive number $r_i$ gives rise to the polytope  
$$
	D_i = \bigcap_{w \in \widehat{W}_{2r_i}(\epsilon)} H^+(u, w).
$$

If the incrementation of the radii is chosen carefully ($r_{i+1}>r_i+1$ for instance), we will obtain a smallest index $m$ such that $\vol(D_m)=\vol(D(u,u^\Gamma)).$ Since $D_m$ and $D(u,u^\Gamma)$ are compact sets with $D(u,u^\Gamma)\subseteq D_m$, this implies $D_m=D(u,u^\Gamma)$ and we have therefore computed our fundamental domain.

The remainder of this section is devoted to establishing \Cref{thm:fund-dom-vol} and giving a pseudocode describing our algorithm.
First, it is easy to see that applying isometries to subsets of $\mathbb{R}^n$ leaves the corresponding volumes invariant.
\begin{proposition}\label{thm:vol-invar-under-isom}
	Let $B \subset \R^n$ be a closed subset. If $\phi \in \E(n)$ is an isometry, then $\vol(B^\phi) = \vol(B)$.
\end{proposition}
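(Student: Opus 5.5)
We use the decomposition $\Psi(\phi)=(\phi_o,\phi_t)$, which writes the action of $\phi$ on $\R^n$ as $v\mapsto v\cdot\phi_o+\phi_t$. Hence $\phi$ is the composition of the linear map $L\colon v\mapsto v\cdot\phi_o$ with the translation $T\colon v\mapsto v+\phi_t$, and $B^\phi=T(L(B))$. Note that $B^\phi$ is closed, hence Borel, because $\phi$ is a homeomorphism. So both volumes are defined as Lebesgue measures.

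First we treat the translation. Lebesgue measure is translation invariant: this holds for boxes directly from the definition, and extends to all Borel sets by the uniqueness of the extension from boxes (Carathéodory). Therefore $\vol(T(C))=\vol(C)$ for every Borel set $C$.

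Next we treat the linear part. For an invertible linear map $L$, the standard change-of-variables fact $\vol(L(C))=|\det L|\,\vol(C)$ applies. One can argue as follows. The measure $C\mapsto\vol(L(C))$ is translation invariant and finite on compact sets, so it is a constant multiple $c_L\cdot\vol$ of Lebesgue measure, by uniqueness of Haar measure on $\R^n$. The constant $c_L$ is computed on the unit cube. Alternatively, and avoiding determinants, evaluate $c_L$ on the unit ball $B_1(0)$. Since $\phi_o\in\OO(n)$ preserves norms, $L(B_1(0))=B_1(0)$, which gives $c_L=1$. This ball-based argument is the cleanest step and I would use it.

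Combining the two steps, $\vol(B^\phi)=\vol(T(L(B)))=\vol(L(B))=\vol(B)$. The only real obstacle is the measure-theoretic step identifying $\vol\circ L$ as a multiple of $\vol$. Here I would simply cite the uniqueness of translation-invariant Radon measures on $\R^n$ (or the change-of-variables formula with $|\det\phi_o|=1$), rather than prove it, since the statement is classical.
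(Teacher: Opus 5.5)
Your proof is correct. The paper does not actually prove this proposition: it only says ``it is easy to see'' and then uses the fact. So there is no competing argument to match. What you have written is the standard justification the authors take for granted.

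The value of your version is that it isolates exactly which facts are needed:
\begin{itemize}
\item translation invariance of Lebesgue measure;
\item uniqueness, up to a scalar, of translation-invariant Borel measures that are finite on compact sets;
\item the fact that $\phi_o\in\OO(n)$ maps $B_1(0)$ onto itself.
\end{itemize}
The last point fixes the constant at $1$ without computing any determinant.

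Two small steps deserve to be written out. First, translation invariance of $C\mapsto\vol(L(C))$ follows from your first step because $L(C+a)=L(C)+L(a)$ by linearity. Second, the conclusion $c_L=1$ requires $0<\vol(B_1(0))<\infty$, since you divide by it. This holds because the ball contains a small cube and is contained in a large one.

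If you would rather not cite Haar uniqueness, an elementary dyadic-cube argument gives the same conclusion. A translation-invariant measure $\mu$ with $\mu([0,1)^n)=c<\infty$ assigns $c\,2^{-kn}$ to each half-open dyadic cube of side $2^{-k}$, by additivity and translation invariance. These cubes form a $\pi$-system generating the Borel sets, so $\mu=c\cdot\vol$.

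Finally, your argument shows that the closedness hypothesis is used only to make $B$ and $B^\phi$ Borel. The identity therefore holds for every Borel set.
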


Next, we establish a relation between fundamental domains of a crystallographic group $\Gamma$ and fundamental domains of the translation normal subgroup $\mathcal{T}(\Gamma).$
\begin{proposition}\label{prop:translation-cell-fund-dom}
    Let $\Gamma\leq \E(n)$ be a crystallographic group and $S=\{\rho_i\mid i \in I\}\cup \{\tau_k\mid k \in K\}$ a generating set of $\Gamma$ as described in \Cref{not:cryst-grp-cosets}. If $F$ forms a fundamental domain for $\Gamma$, then
    $$
		C \coloneqq \bigcup_{i \in I} F^{\rho_i}
	$$
	is a fundamental domain for $\T(\Gamma) \leq \E(n)$. 
\end{proposition}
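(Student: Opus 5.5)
We verify the two defining conditions of a fundamental domain for $\T(\Gamma)$: the covering property (i) and the existence of a system of representatives $V_C$ with $C^\circ\subseteq V_C\subseteq C$ (ii). Throughout we use the coset decomposition $\Gamma=\bigcup_{i\in I}\rho_i\T(\Gamma)$ from \Cref{not:cryst-grp-cosets}. Since $\T(\Gamma)$ is normal, every $\gamma\in\Gamma$ can be written uniquely as $\gamma=\rho_i\tau$ with $i\in I$ and $\tau\in\T(\Gamma)$.

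\textbf{Covering.} Take $v\in\R^n$. Since $F$ is a fundamental domain for $\Gamma$, there are $x\in F$ and $\gamma\in\Gamma$ with $v=x^\gamma$. Write $\gamma=\rho_i\tau$. Then $v=(x^{\rho_i})^\tau$ with $x^{\rho_i}\in F^{\rho_i}\subseteq C$. Hence $\bigcup_{\tau\in\T(\Gamma)}C^\tau=\R^n$.

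\textbf{Representatives.} Let $V$ be a system of representatives for the $\Gamma$-orbits with $F^\circ\subseteq V\subseteq F$, and put $V_C:=\bigcup_{i\in I}V^{\rho_i}\subseteq C$.

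First, $V_C$ meets every $\T(\Gamma)$-orbit. Given $v$, choose $x\in V$ and $\gamma=\rho_i\tau$ with $v=x^\gamma$. Then $x^{\rho_i}\in V_C$ lies in the $\T(\Gamma)$-orbit of $v$.

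Second, $V_C$ meets each orbit at most once. Suppose $x^{\rho_i}$ and $y^{\rho_j}$ are $\T(\Gamma)$-equivalent with $x,y\in V$. Then $x$ and $y$ are $\Gamma$-equivalent, so $x=y$. Thus $x^{\rho_i\tau}=x^{\rho_j}$ for some $\tau\in\T(\Gamma)$, and $\rho_i\tau\rho_j^{-1}\in\stab_\Gamma(x)$.
\begin{itemize}
\item If $\stab_\Gamma(x)$ is trivial, then $\rho_i\T(\Gamma)=\rho_j\T(\Gamma)$, so $i=j$ and the two points coincide.
\item If the stabiliser is nontrivial, $V_C$ might contain two distinct $\T(\Gamma)$-equivalent points. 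This can only happen when $x$ is in special position, hence $x\in\partial F$ by \Cref{prop:special-pos-inside-fund-dom}.
\end{itemize}
The fix is to prune $V_C$: from each $\T(\Gamma)$-orbit keep exactly one point, always preferring a point of $C^\circ$ when one exists. It remains to show that this pruning does not delete points of $C^\circ$. For that it suffices that no two distinct points of $C^\circ$ are $\T(\Gamma)$-equivalent.

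\textbf{Main obstacle: the interior.} We must show that two distinct points of $C^\circ$ are never $\T(\Gamma)$-equivalent, and that $C^\circ\subseteq V_C$ after pruning. The difficulty is that $C^\circ$ can be larger than $\bigcup_i (F^{\rho_i})^\circ$, because it also contains points on shared boundaries of adjacent copies $F^{\rho_i}$ and $F^{\rho_j}$. The plan is a measure/density argument.
\begin{enumerate}
\item Show that the translates $F^\gamma$, $\gamma\in\Gamma$, have pairwise disjoint interiors. If $z\in (F^{\gamma})^\circ\cap(F^{\gamma'})^\circ$, then $z^{\gamma^{-1}}$ and $z^{\gamma'^{-1}}$ are distinct $\Gamma$-equivalent points of $F^\circ$, contradicting \Cref{lma:translation-moves-interior}. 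They are distinct unless $z$ is a special point, and one avoids those by perturbing $z$ within the open intersection, since special points have empty interior by \Cref{prop:special-pos-inside-fund-dom}.
\item Conclude that the copies $C^\tau=\bigcup_i F^{\rho_i\tau}$, $\tau\in\T(\Gamma)$, have pairwise disjoint interiors away from a closed nowhere-dense set.
\item Argue by contradiction. Suppose $p,p^\tau\in C^\circ$ are distinct. Then a small open ball $U\subseteq C^\circ$ around $p$ satisfies $U^\tau\subseteq C^\circ$. Since $U$ is open, it meets some $(F^{\rho_i})^\circ$ in an open set $U'$, and $U'^\tau$ meets some $(F^{\rho_j})^\circ$ in an open set. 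This produces an open set of points $q\in(F^{\rho_i})^\circ$ with $q^\tau\in(F^{\rho_j})^\circ$, that is, two distinct $\Gamma$-equivalent points in $\Gamma$-translates of $F^\circ$.
\item Apply the disjointness from step 1 to this configuration. This forces $\rho_i\tau=\rho_j$ on an open set, hence $\tau=\Id$, contradicting distinctness.
\end{enumerate}
Step 3 is the delicate point, because a point of $C^\circ$ need not lie in any $(F^{\rho_i})^\circ$. I expect to handle it with the connectedness and closedness assumptions on $F$ (compact, with boundary of measure zero), and I would use the pruning choice above to place all of $C^\circ$ inside the final set of representatives.
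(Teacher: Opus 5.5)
Your route differs from the paper's, and on the decisive point it is more careful. The paper takes $V'=\bigcup_{i\in I}V^{\rho_i}$ as the system of representatives and deduces $\rho_i\tau\rho_j^{-1}=\Id$ from $a^{\rho_i\tau\rho_j^{-1}}=a$. That is exactly the step you distrust, and it does fail at special points. For $p2$ with $F=[0,1]\times[0,\tfrac12]$ and the involution $\rho_2\colon v\mapsto -v+(1,1)$, the set $V'$ contains the $\T(\Gamma)$-equivalent points $(0,0)$ and $(1,1)$ (or $(1,0)$ and $(0,1)$). The paper also never checks $C^\circ\subseteq V'$. Your key claim is that two distinct points of $C^\circ$ are never $\T(\Gamma)$-equivalent, derived from the disjointness of the interiors of the translates $F^\gamma$. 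That claim is what the proposition really rests on. As written, though, two steps need repair.

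\textbf{The pruning.} You prune $V_C=\bigcup_i V^{\rho_i}$ and only check that no point of $C^\circ$ is deleted. This presupposes $C^\circ\subseteq V_C$, which you never prove and which can fail even for compact connected $F$. In the $p2$ example, attach the segments $\{0\}\times[-\tfrac12,0]$ and $[0,1]\times\{-\tfrac12\}$ to $F$. Neither $F^\circ$ nor $C^\circ=(0,1)^2$ changes. But now $V$ may represent the $\Gamma$-orbit of $(x,\tfrac12)$, $0<x<\tfrac12$, by $(x,-\tfrac12)$, and then $(x,\tfrac12)\in C^\circ\setminus V_C$. The fix is to bypass $V_C$ altogether. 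By your covering step every $\T(\Gamma)$-orbit meets $C$, and by the key claim it meets $C^\circ$ at most once. So take from each orbit its point in $C^\circ$ if there is one, and any point of $C$ otherwise. The case analysis on stabilisers then disappears.

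\textbf{Step 3.} This step is not actually delicate: you never need $p$ itself to lie in some $(F^{\rho_i})^\circ$, only the ball $U$ to meet one. If $F$ is closed, $U$ is the finite union of the relatively closed sets $U\cap F^{\rho_i}$. A finite union of closed sets with empty interior has empty interior, so some ball $U'\subseteq U$ lies in $(F^{\rho_i})^\circ$. Applying the same argument to $U'^\tau\subseteq C$ produces $q\in(F^{\rho_i})^\circ\cap(F^{\rho_j\tau^{-1}})^\circ$. Step 1 then gives $\rho_j=\rho_i\tau$, hence $i=j$ and $\tau=\Id$.

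Step 2 is not needed, and neither are connectedness or boundaries of measure zero. The perturbation in Step 1 is also unnecessary: points of $(F^\gamma)^\circ$ are already in general position by \Cref{prop:special-pos-inside-fund-dom}.

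Closedness of $F$, however, must be added as a hypothesis; it holds for the compact domains the paper uses, and without it the proposition is false. Take $pm$ with mirror $\rho_2\colon(x,y)\mapsto(-x,y)$ and lattice $\Z^2$, and let $A=\mathbb{Q}_{>0}\cup(\R_{<0}\setminus\mathbb{Q})$. Enlarge $F=[0,\tfrac12]\times[0,1]$ by the segments $\{x\}\times[1,2)$ for $x\in A\cap(-\tfrac12,\tfrac12)$, and by $[-\tfrac12,\tfrac12]\times\{\tfrac32\}$. The result is a connected fundamental domain with unchanged interior, yet $C^\circ$ contains both $(\tfrac14,\tfrac12)$ and $(\tfrac14,\tfrac32)$.
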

Note, we call the set $C$ in the above statement a \emph{translation cell} of $\Gamma$.
In \ref{apx:prop:translation-cell-fund-dom} we provide a proof of this statement in the appendix and proceed by computing the volume of a fundamental domain.

\begin{theorem}\label{thm:fund-dom-vol}
Let $\Gamma\leq \E(n)$ be a crystallographic group and $F$ a fundamental domain for $\Gamma$. If $C$ forms a translation cell of $\Gamma$ satisfying $F \subseteq C$, then 
	$$\vol(F) = \frac{\vol(C)}{|\P(\Gamma)|}.$$
\end{theorem}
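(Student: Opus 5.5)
The plan is to compare $C$ with the translates $F^{\rho_i}$, $i\in I$, and to use that these translates overlap only in sets of measure zero.

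\textbf{Step 1 (setup).} By Proposition~\ref{prop:translation-cell-fund-dom}, a translation cell has the form $C=\bigcup_{i\in I}F'^{\rho_i}$ for some fundamental domain $F'$ of $\Gamma$ and a coset representative system $\{\rho_i\}$. Since $F\subseteq C$, I will take $C=\bigcup_{i\in I}F^{\rho_i}$, arranged so that the identity representative contributes $F$ itself. Here $|I|=[\Gamma:\T(\Gamma)]=|\P(\Gamma)|$.

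\textbf{Step 2 (measure-zero overlaps).} Let $V$ be the system of representatives with $F^\circ\subseteq V\subseteq F$. I claim the sets $V^{\rho_i}$ are pairwise disjoint. Indeed, if $v^{\rho_i}=w^{\rho_j}$ with $v,w\in V$, then $v$ and $w$ lie in the same $\Gamma$-orbit, so $v=w$. Hence $\rho_i\rho_j^{-1}$ stabilises $v$. For $v\in F^\circ$, Proposition~\ref{prop:special-pos-inside-fund-dom} says $v$ is in general position, so $\rho_i=\rho_j$. Therefore the interiors $(F^\circ)^{\rho_i}$ are pairwise disjoint, and each overlap $F^{\rho_i}\cap F^{\rho_j}$ ($i\neq j$) lies in $\partial F^{\rho_i}\cup\partial F^{\rho_j}$.

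\textbf{Step 3 (the main obstacle: $\vol(\partial F)=0$).} This is not automatic for arbitrary compact sets, so I would handle it first.
\begin{itemize}
\item If the paper's fundamental domains are the Dirichlet cells of Theorem~\ref{thm:dirichlet-is-fund-dom}, they are convex polytopes, so their boundaries are finite unions of hyperplane pieces and have measure zero.
\item For general $F$, I would aim to show that $\partial F$ is null using a tiling argument. The sets $\partial F^{\gamma}$ lie inside the union of tiles with pairwise disjoint interiors covering $\R^n$, and one compares volumes on a large ball.
\item Failing that, I would add a standing regularity assumption (boundary of measure zero), as is customary.
\end{itemize}

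\textbf{Step 4 (conclusion).} With $\vol(\partial F)=0$, the Step 2 overlaps are null, so the measure is additive over the union:
$$\vol(C)=\sum_{i\in I}\vol(F^{\rho_i}).$$
Proposition~\ref{thm:vol-invar-under-isom} gives $\vol(F^{\rho_i})=\vol(F)$ for every $i$. Hence
$$\vol(C)=|I|\cdot\vol(F)=|\P(\Gamma)|\cdot\vol(F),$$
which is the claimed formula.

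\textbf{Alternative route.} One could instead show that any two fundamental domains of $\T(\Gamma)$ have equal volume, namely the covolume $|\det T|$, by cutting one into pieces and translating them onto the other. This is not needed for the argument above.
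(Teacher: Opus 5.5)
Your main line ($C=\bigcup_{i\in I}F^{\rho_i}$ is a union of $|I|=|\P(\Gamma)|$ isometric copies of $F$ with pairwise disjoint interiors, so the volumes add) is the paper's proof. The paper reads the hypothesis $F\subseteq C$ the same way you do. You are also more careful than the paper: its phrase ``intersection free'' silently needs the overlaps $F^{\rho_i}\cap F^{\rho_j}$ to be null.

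The one thing to correct is the second bullet of Step 3. No tiling argument can prove $\vol(\partial F)=0$ for a general fundamental domain, because the statement is false without such a regularity assumption. Here is a counterexample in $p2$:
\begin{itemize}
\item Take $F_0=[0,1]\times[0,\tfrac12]$.
\item Take a compact, connected, nowhere dense set $K\subset (F_0^{\phi})^\circ$ of positive measure, for example a comb over a Cantor set of positive length.
\item Take a segment $L\subset F_0^{\phi}$ joining $0$ to $K$, and set $F=F_0\cup L\cup K$.
\end{itemize}
Since $L\cup K$ is closed and nowhere dense, $F^\circ=F_0^\circ$. So the representative system of $F_0$ still works, and $F$ is a compact connected fundamental domain. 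But $(L\cup K)^{\phi}\subseteq F_0$, so the translation cell is $F\cup F^{\phi}=F_0\cup F_0^{\phi}$, which has volume $1$. Meanwhile $\vol(F)=\tfrac12+\vol(K)>\tfrac12$, contradicting the formula.

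So your option (c), assuming $\vol(\partial F)=0$, is the required resolution rather than a fallback. This assumption holds for the Dirichlet cells and polyhedral deformations the paper actually uses.
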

\begin{proof}
We know that a translation cell is a fundamental domain for $\T(\Gamma)$ (see Proposition~\ref{prop:translation-cell-fund-dom}). We observe that $C$ contains $|I|$ copies of the fundamental  domain $F$. Since these copies are intersection free sets that lie inside $C$, we obtain the above equality.
\end{proof}

With the above result, we are able to reduce the computation of the volume of a fundamental domain for $\Gamma$ to computing the volume of the translation cell without computing the fundamental domain itself. Since the volume of the translation cell of $\Gamma$ is the determinant of the lattice $\mathcal{L}(\Gamma)$, obtaining the desired volume is straight forward.

By combining this observation with \Cref{thm:dirichlet-word-length} we are able to give our proposed algorithm.

\begin{algorithm}[H]
	\caption{Dirichlet Cell}\label{alg:dirichletcell}
	\LinesNumberedHidden
	\SetAlgoLined
	\KwData{a crystallographic group $\Gamma \leq \E(n)$ given by a set of generators $\{ \rho_i, \tau_k \mid i \in I, k \in K \}$ as described in Remark~\ref{not:cryst-grp-cosets} and a point $\texttt{u}$ in general position for $\Gamma$}
	\KwResult{\texttt{poly}, a polyhedron that is a fundamental domain. }

    \hrulefill
    
    $\texttt{fdVol} \gets $ Volume of fundamental domain computed with Theorem~\ref{thm:fund-dom-vol}\;
    $\texttt{words} \gets \{ \rho_i, \tau_k \mid i \in I, k \in K \}$

    \Comment{Compute initial candidate}
    $\texttt{orbit} \gets [\,]$\;
	\For{$\gamma$ in $\texttt{words}$}{
		Add($\texttt{orbit}$, $u^\gamma$)\;
	}

    $\texttt{halfspaces} \gets$ halfspaces $H^+(u,v) $ for all $v$ in $\texttt{orbit}$\;
    $\texttt{fdCand} \gets$ polyhedron given by intersection of $\texttt{halfspaces}$\;

    \Comment{Consider longer words until right volume is reached}
    \While{$vol(\texttt{fdCand}) > \texttt{fdVol}$}{
        $\texttt{words} \gets [\texttt{words}, \alpha \cdot \beta \text{ for }\alpha $ in $ \texttt{words},  \beta $ in $\{ \rho_i, \tau_k \mid i \in I, k \in K \}$\;

    	\For{ $\gamma \in \texttt{words}$}{
    		Add($\texttt{orbit}$, $u^\gamma$)\;
    	}

        $\texttt{halfspaces} \gets$ half-spaces $H^+(u,v)$ for all $v$ in $\texttt{orbit}$\;
        $\texttt{fdCand} \gets$ polyhedron given by intersection of $\texttt{halfspaces}$\;
    }

	return $\texttt{fdCand}$\;
\end{algorithm}

Algorithm \ref{alg:dirichletcell} is implemented in the computer algebra system \emph{GAP} \cite{GAP4} and is available via the package \emph{CrystFundDom} \cite{crystfunddom}, with additional visualisations available through \emph{GAPic} \cite{gapic}. 

\section{Constructing Topological Interlocking Assemblies}
\label{section:TIA}

In this section, we explore potential applications of 3-dimensional crystallographic groups. For this purpose, we make use of our results in \Cref{section:comp}. Our aim is to exploit a given 3-dimensional crystallographic group and construct a corresponding fundamental domain such that copies of this fundamental domain can be arranged in Euclidean 3-space into a so-called \emph{topological interlocking assembly (TIA)}.
In order to give a brief description of a topological interlocking assembly, we have to recall some notions from~\cite{goertzen2024constructinginterlockingassembliescrystallographic}. 
The definition of a block is a non-empty subset $X \subset \mathbb{R}^3$ that is connected, compact and further satisfies $X= \overline{{X^\circ}}$. An assembly $M$ is a countable set of blocks such that any two distinct blocks $X,X'\in M$ satisfy $X\cap X'= \partial X\cap \partial X'$. A frame $J$ of a given assembly $M$ is a proper non-empty subset of $M$.
Furthermore, a motion is a continuous map $\phi:[0,1]\to \SE(3)$ with $\phi(0)=Id$, where $\SE(3)$ denotes the group of Euclidean motions. 
Then {(loosely speaking)}, a topological interlocking assembly (TIA) is an assembly of blocks constrained by a fixed frame $J$ such that every finite non-empty subset of blocks cannot be moved by continuous motions without causing intersections. In the literature, much attention has been given to investigations involving multiple copies of the same block, see \cite{GoertzenPhD,osteomorphic-block-estrin}.
As an example, Dyskin et.\ al have utilised the Platonic solids to construct TIAs and have further explored the mechanical properties of the resulting assemblies \cite{topological-interlocking-platonic-solids}. 

 The study of TIAs dates back to the 18th century. Early studies on TIAs can be found in the works of Abeille in 1735 \cite{abeille_memoire_1735} and Frezier in 1738 \cite{frezier_theorie_1738}.
Since the impactful work of Dyskin et al.\ \cite{dyskin_new_2001}, who have reintroduced TIAs as a material design concept, planar TIAs have been studied extensively, see \cite{VoroNoodles,goertzen2025influence,kanel-belov_interlocking_2010,Wang-2019-Topolock}. Although TIAs are widely used in civil engineering and architecture, there has been less investigation of these assemblies from a purely mathematical perspective. The first (mathematically) exact definition of a TIA was given in \cite{goertzen2022topological}.  T.\ Goertzen 
\cite{goertzen2024constructinginterlockingassembliescrystallographic,goertzen2024mathematicalfoundationsinterlockingassemblies}
explored TIAs that have been constructed by exploiting 2-dimensional crystallographic groups. 
Inspired by these works, we explore the construction of TIAs by employing 3-dimensional crystallographic groups. Note, another approach to construct three dimensional TIAs has been presented in \cite{EBERT2024103779} which constructs handlebody plesiohedra. Our approach can be summarised as follows.

\begin{approach}
Let $\Gamma\leq \E(3)$ be a crystallographic group and $u\in \mathbb{R}^3$ a point in general position for $\Gamma$. Our aim is to construct a block $X$ and find $\phi_i\in \Gamma,$ where $i$ is an element of a countable index set $I$ such that $\{X^{\phi_i}\mid i \in I\}$ forms a TIA. Here, the construction of $X$ is achieved as follows: 
\begin{enumerate}
    \item We use the point $u$ and Algorithm~\ref{alg:dirichletcell} to construct a Dirichlet cell $D$ that forms a fundamental domain for $\Gamma,$
    \item Next, we use $D$ to obtain another fundamental domain $F$ of $\Gamma$ that does not form a convex set in $\R^3.$ This is further illustrated in \Cref{def:compatible-deformation}.
\end{enumerate}

\end{approach}

Note, the group elements $\phi_i$ with $i\in I$ can be chosen depending on the application of the desired TIA. In the following we will consider the crystallographic group $p23$ and give two different sets of group elements to construct candidates of TIAs: one realising a tripod (see \Cref{fig:ass-195}) and another one approximating the Stanford Bunny (see \Cref{fig:p23-bunny}). 

Step 2 of our described approach requires that a convex fundamental domain for a given crystallographic group can be transformed into a non-convex fundamental domain for the same group. This is achieved by the deformation of a fundamental domain as described below. 

\begin{definition}\label{def:compatible-deformation}
    Let $\Gamma \leq E(3)$ be a crystallographic group with fundamental domain $F$ and $D\subset \R^3$ a closed set such that $\partial F \cap \partial D \neq \emptyset$. We define the set $G$ as 
    $$
        G \coloneqq \overline{(F \cup D) \setminus D^{\Gamma \setminus \Id}}.
    $$
    If $G$ is a fundamental domain for $\Gamma$, then we call $G$ the \emph{deformation of $F$ by $D$}.
\end{definition}

In the following, we give an example of a fundamental domain obtained from \Cref{def:compatible-deformation}.
In particular, we illustrate our method to construct assemblies that have a high potential of being TIAs by using the crystallographic group $p23\leq \E(3)$.
In order to define $p23$, let $\rho_i,\tau_i$ for $i=1,2,3$ be isometries in $\E(3)$ given by
\begin{align*}
&\rho_1 \coloneqq \left( \begin{pmatrix}
	0& 1& 0\\
	0& 0& 1\\ 
	1& 0& 0
\end{pmatrix}, \begin{pmatrix} 0\\ 0\\ 0 \end{pmatrix} \right),
&&\rho_2 \coloneqq \left( \begin{pmatrix}
	-1& 0& 0\\
	0& 1& 0\\ 
	0& 0& -1
\end{pmatrix}, \begin{pmatrix} 0\\ 0\\ 0 \end{pmatrix} \right),\\
&\rho_3 \coloneqq \left( \begin{pmatrix}
	-1& 0& 0\\
	0& -1& 0\\ 
	0& 0& 1
\end{pmatrix}, \begin{pmatrix} 0\\ 0\\ 0 \end{pmatrix} \right),
&&\tau_1 \coloneqq \left( \begin{pmatrix}
	1& 0& 0\\
	0& 1& 0\\ 
	0& 0& 1
\end{pmatrix}, \begin{pmatrix} 1\\ 0\\ 0 \end{pmatrix} \right),\\
&\tau_2 \coloneqq \left( \begin{pmatrix}
	1& 0& 0\\
	0& 1& 0\\ 
	0& 0& 1
\end{pmatrix}, \begin{pmatrix} 0\\ 1\\ 0 \end{pmatrix} \right),
&&\tau_3 \coloneqq \left( \begin{pmatrix}
	1& 0& 0\\
	0& 1& 0\\ 
	0& 0& 1
\end{pmatrix}, \begin{pmatrix} 0\\ 0\\ 1 \end{pmatrix}\right).
\end{align*}
These elements generate the group $p23$, hence $
	p23 \coloneqq \langle \rho_1, \rho_2, \rho_3, \tau_1, \tau_2, \tau_3 \rangle.
$
If we consider the point $u \coloneqq ( \frac{1}{4} \; \frac{1}{8} \;\frac{1}{4})^t\in \R^3$ and apply Algorithm~\ref{alg:dirichletcell} with the group $p_{23}$ given by the above generators as input, we obtain a fundamental domain $D_{23}$ given by the convex hull of the following points
$$
\left\{
\begin{pmatrix} -\frac{1}{2} \\ -\frac{1}{2} \\ -\frac{1}{2} \end{pmatrix},
\begin{pmatrix} -\frac{1}{3} \\ -\frac{1}{3} \\ -\frac{2}{3} \end{pmatrix},
\begin{pmatrix} 0 \\ 0\\ -\frac{1}{2} \end{pmatrix},
\begin{pmatrix} 0 \\ 0 \\ 0 \end{pmatrix},
\begin{pmatrix} -\frac{1}{2} \\ 0 \\ 0 \end{pmatrix},
\begin{pmatrix} -\frac{1}{3} \\ \frac{1}{3} \\ -\frac{1}{3} \end{pmatrix},
\begin{pmatrix} -\frac{1}{2} \\ 0 \\ -\frac{1}{2} \end{pmatrix},
\begin{pmatrix} -\frac{2}{3} \\ -\frac{1}{3} \\ -\frac{1}{3} \end{pmatrix},
\begin{pmatrix} -\frac{1}{4} \\ -\frac{1}{4} \\ -\frac{1}{4} \end{pmatrix}
\right\}.
$$

A visualisation of the fundamental domain $D_{23}$ in three different views can be found in Figure~\ref{fig:initial-fund-dom-195}.

\begin{figure}[H]
    \centering
    \hfill
    \begin{subfigure}[b]{0.3\textwidth}
        \includegraphics[width=\textwidth]{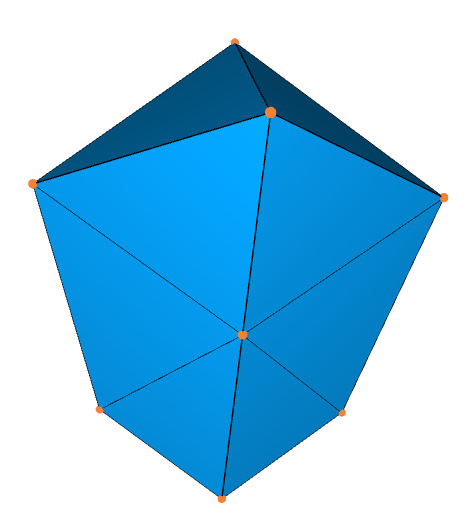}
    \end{subfigure}
    \hfill
    \begin{subfigure}[b]{0.3\textwidth}
        \includegraphics[width=\textwidth]{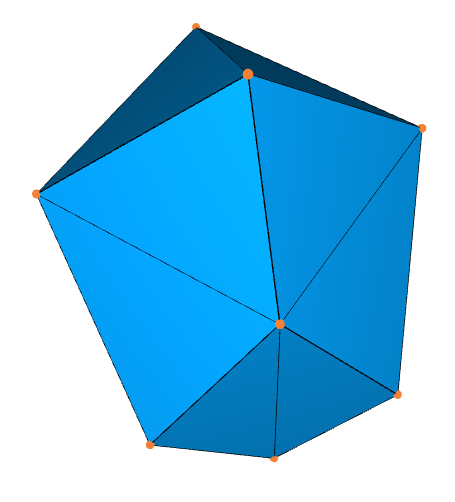}
    \end{subfigure}
    \hfill
    \begin{subfigure}[b]{0.3\textwidth}
        \includegraphics[width=\textwidth]{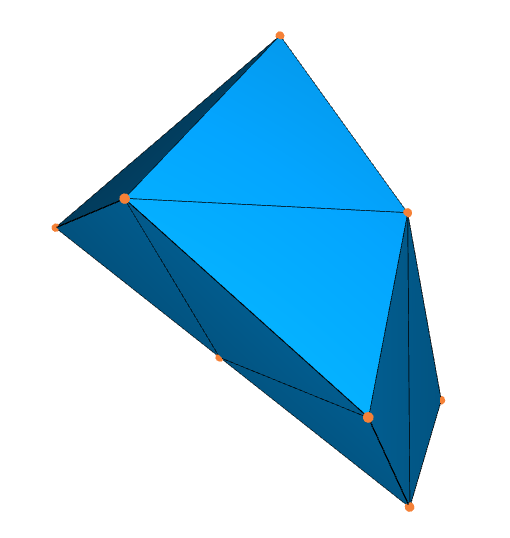}
    \end{subfigure}
    \hfill
    \caption{Different views of the fundamental domain $D_{23}$ of $p23$.}
    \label{fig:initial-fund-dom-195}
\end{figure}

Next, we deform $D_{23}$ to obtain a non-convex fundamental domain $F_{23}$. As described in \Cref{def:compatible-deformation} we can obtain a new fundamental domain for $p23$ by applying a deformation to $D_{23}.$ 
The resulting fundamental domain $F_{23}$ of $p23$ is illustrated in the below figure.
\begin{figure}[H]
    \centering
    \hfill
    \begin{subfigure}[b]{0.3\textwidth}
        \includegraphics[width=\textwidth]{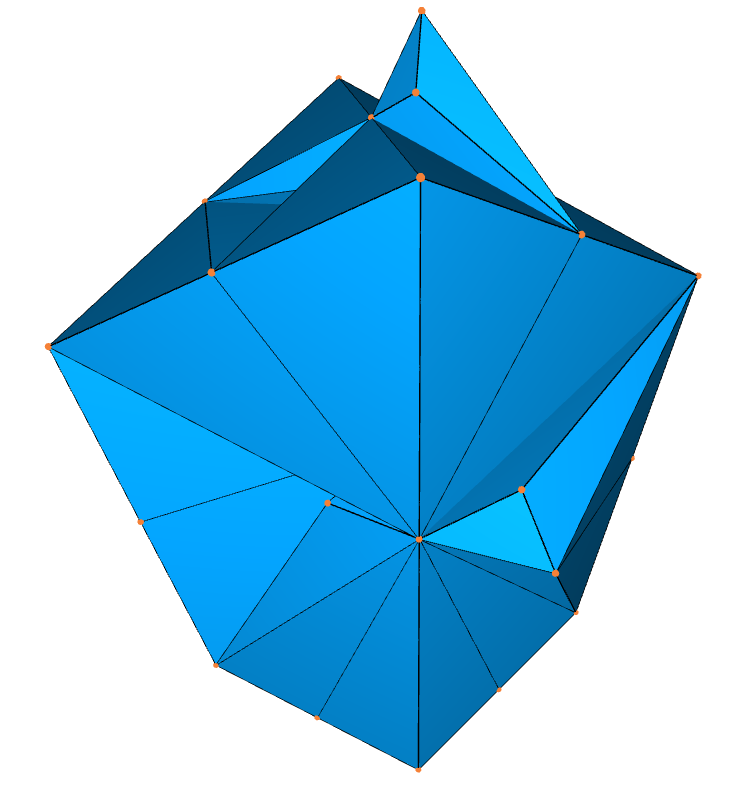}
    \end{subfigure}
    \hfill
    \begin{subfigure}[b]{0.3\textwidth}
        \includegraphics[width=\textwidth]{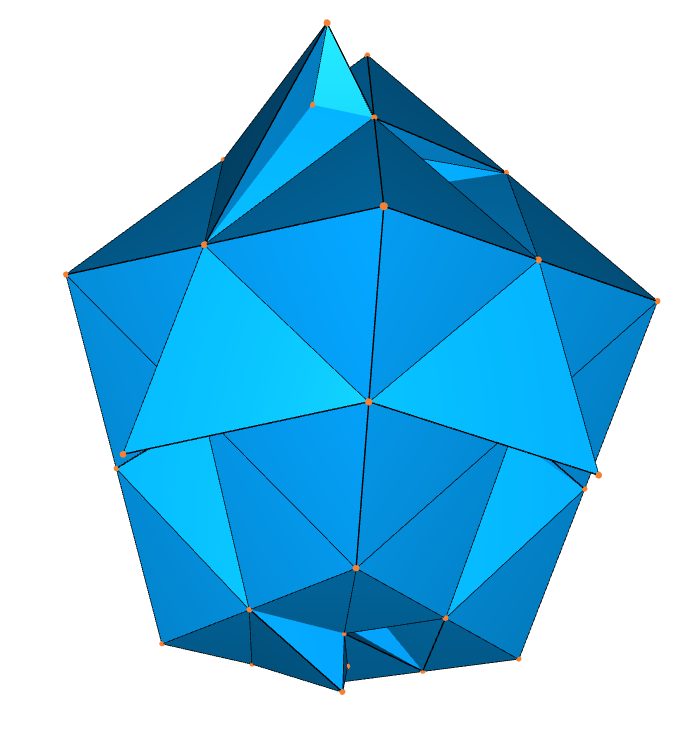}
    \end{subfigure}
    \hfill
    \begin{subfigure}[b]{0.25\textwidth}
        \includegraphics[width=\textwidth]{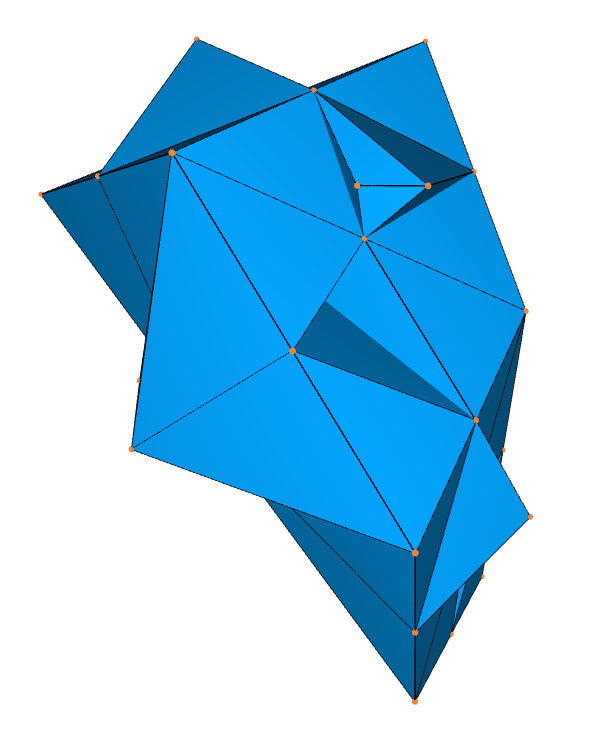}
    \end{subfigure}
    \hfill
    \caption{Different views of the deformed fundamental domain $F_{23}$ of $p23$.}
    \label{fig:def-fund-dom-195}
\end{figure}

Since $F_{23}$ is a fundamental domain for $p23$, and hence space-filling, we can create assemblies consisting of copies of $F_{23}$ by exploiting the group $p23$ as described in \Cref{def:compatible-deformation}. By only choosing a subset of the blocks we can approximate arbitrary surfaces.
To demonstrate this, we approximated the Stanford bunny (\cite{stanford_bunny_stl}). This approximation is achieved with $1323$ blocks forming copies of $F_{23}$. Note, the approximation could be refined by considering lower scaled fundamental domains and thus more blocks to approximate the shape. The resulting assembly is shown in Figure~\ref{fig:p23-bunny}.

\begin{figure}[H]
    \centering
    \newcommand{\imgboxheight}{4cm} 

    \begin{subfigure}[b]{0.22\textwidth}
        \begin{minipage}[t][\imgboxheight][t]{\textwidth}
            \centering
            \includegraphics[width=\textwidth]{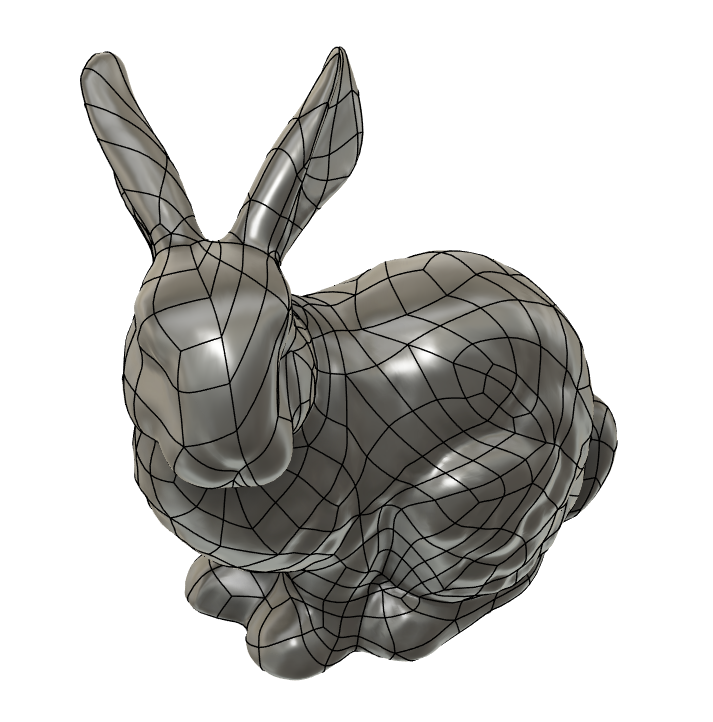}
        \end{minipage}
        \caption{}
    \end{subfigure}
    \hfill
    \begin{subfigure}[b]{0.22\textwidth}
        \begin{minipage}[t][\imgboxheight][t]{\textwidth}
            \centering
            \includegraphics[width=\textwidth]{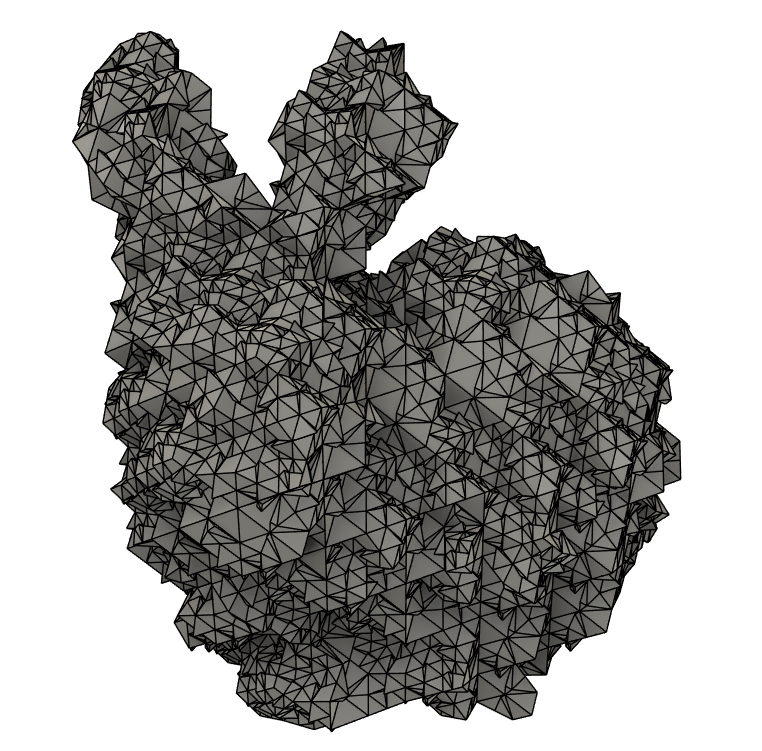}
        \end{minipage}
        \caption{}
    \end{subfigure}
    \hfill
    \begin{subfigure}[b]{0.22\textwidth}
        \begin{minipage}[t][\imgboxheight][t]{\textwidth}
            \centering
            \includegraphics[width=0.7\textwidth]{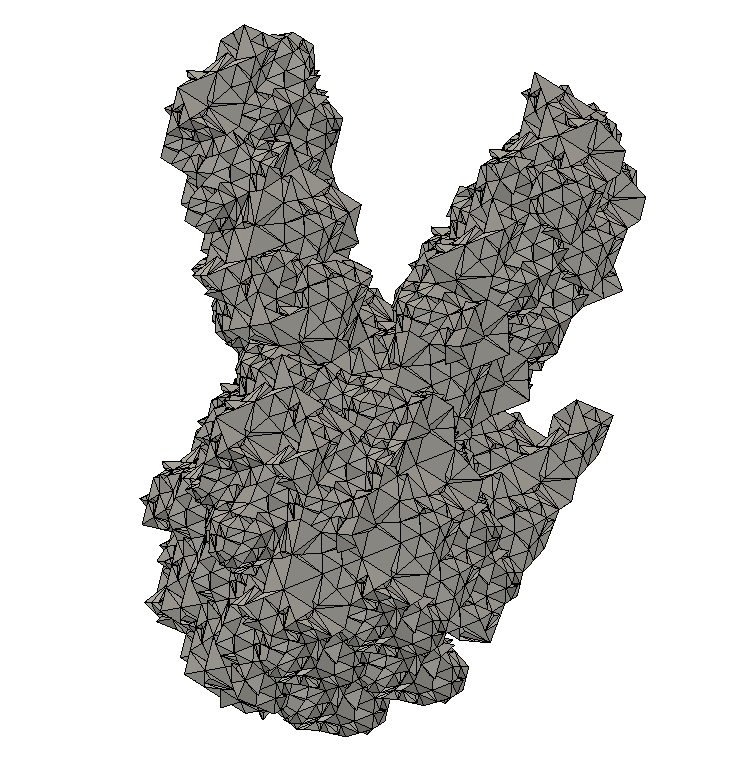}
        \end{minipage}
        \caption{}
    \end{subfigure}
    \hfill
    \begin{subfigure}[b]{0.22\textwidth}
        \begin{minipage}[t][\imgboxheight][t]{\textwidth}
            \centering
            \includegraphics[width=0.4\textwidth]{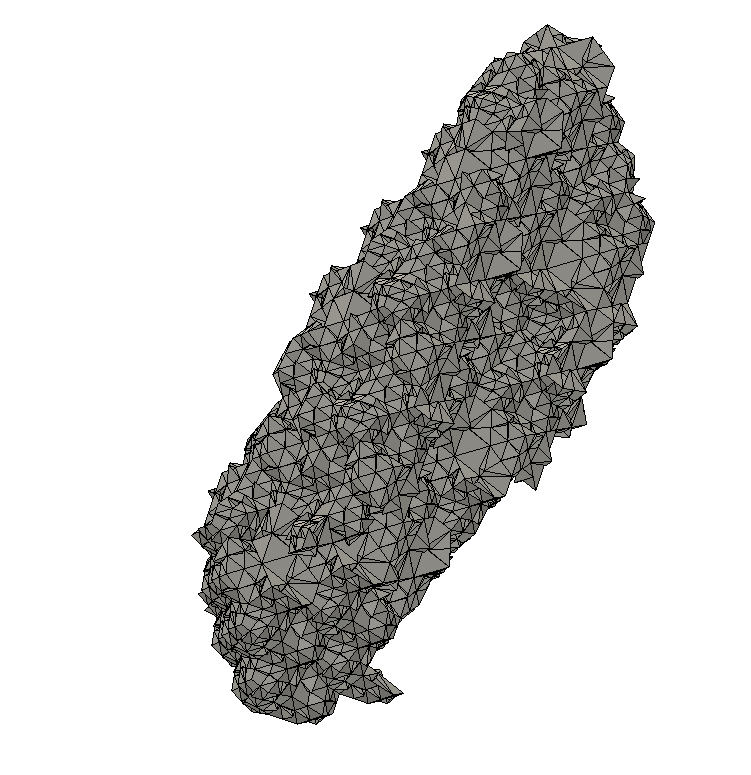}
        \end{minipage}
        \caption{}
    \end{subfigure}

    \caption{The original Stanford bunny (a) and a corresponding approximation given by $1323$ blocks (b) as well as an approximation of the head of the bunny made of $626$ blocks (c) and an ear made of $653$ blocks.}
    \label{fig:p23-bunny}
\end{figure}

Additionally the deformation can also be performed in a non-linear manner such that the resulting fundamental domain has curved faces. To illustrate this, we present a deformation of the same fundamental domain of group $p23$ in Figure~\ref{fig:smurf-195}. Note, that the resulting fundamental domain still retains the fundamental domain property and in particular can fill $\R^3$ without overlapping.
\begin{figure}[H]
    \centering
    \hfill
    \begin{subfigure}[b]{0.3\textwidth}
        \includegraphics[width=\textwidth]{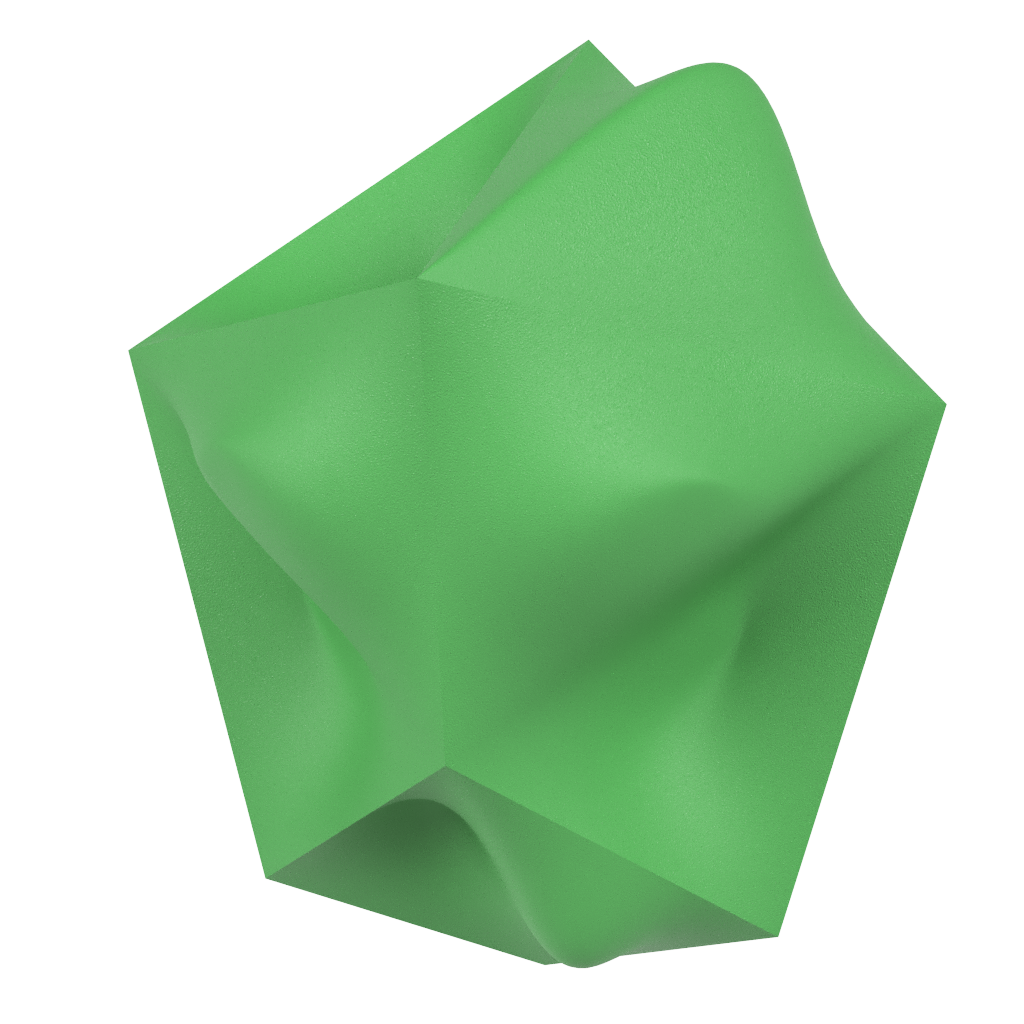}
    \end{subfigure}
    \hfill
    \begin{subfigure}[b]{0.3\textwidth}
        \includegraphics[width=\textwidth]{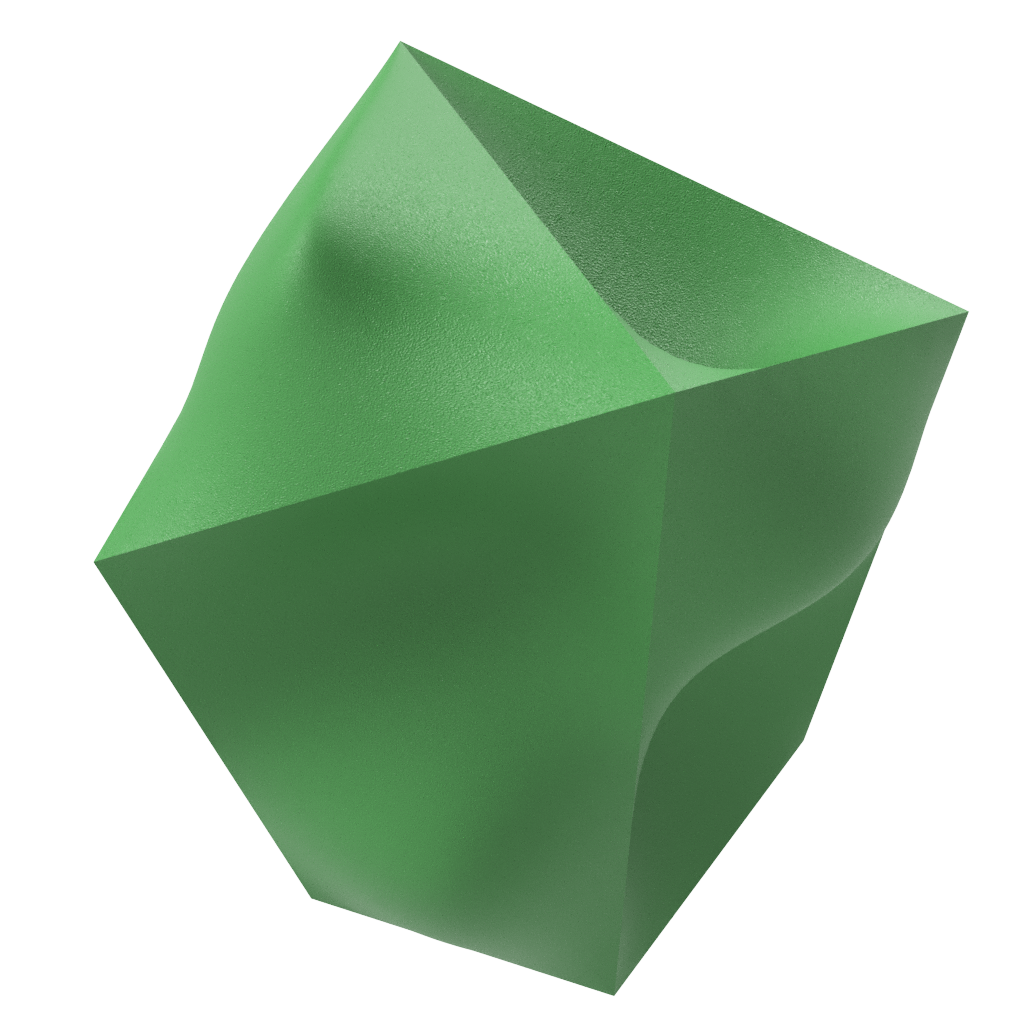}
    \end{subfigure}
    \hfill
    \begin{subfigure}[b]{0.3\textwidth}
        \includegraphics[width=\textwidth]{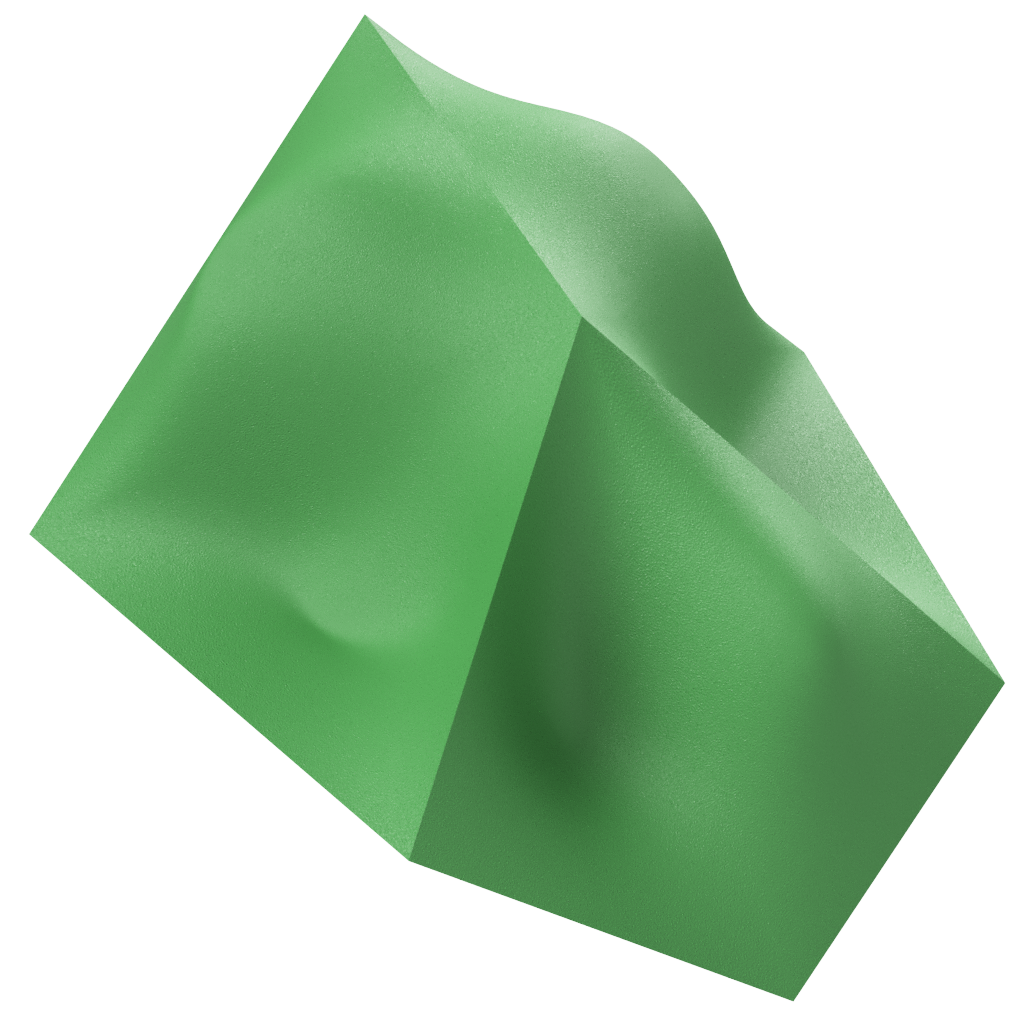}
    \end{subfigure}
    \hfill
    \caption{Different views of the curved deformed fundamental domain $F_{23}$ of $p23$.}
    \label{fig:smurf-195}
\end{figure}

 Using the crystallographic group $p23$ to arrange copies of $F_{23}$ in Euclidean $3$-dimensional space opens up the construction of various assemblies.
 To illustrate this instance, we present the assembly shown in Figure~\ref{fig:ass-195}. This assembly of blocks realises a tripod which we conjecture to be a TIA. Our hypothesis is strengthened by the fact that 3D-printed blocks with the frame (bottom blocks touching the ground) hold in place, suggesting they possess the interlocking property. 
Note, we have not proven that the two assemblies presented above are indeed TIAs. Here, we have illustrated these examples to showcase the rich design space of our approach. To verify the interlocking property of these assemblies, we have to employ expensive numerical computations as described in \cite{goertzen2025influence,Wang-2019-Topolock}. Establishing such a proof will be the focus of future work. 
\begin{figure}[H]
    \centering
    \hfill
    \begin{subfigure}[b]{0.335\textwidth}
        \includegraphics[width=\textwidth]{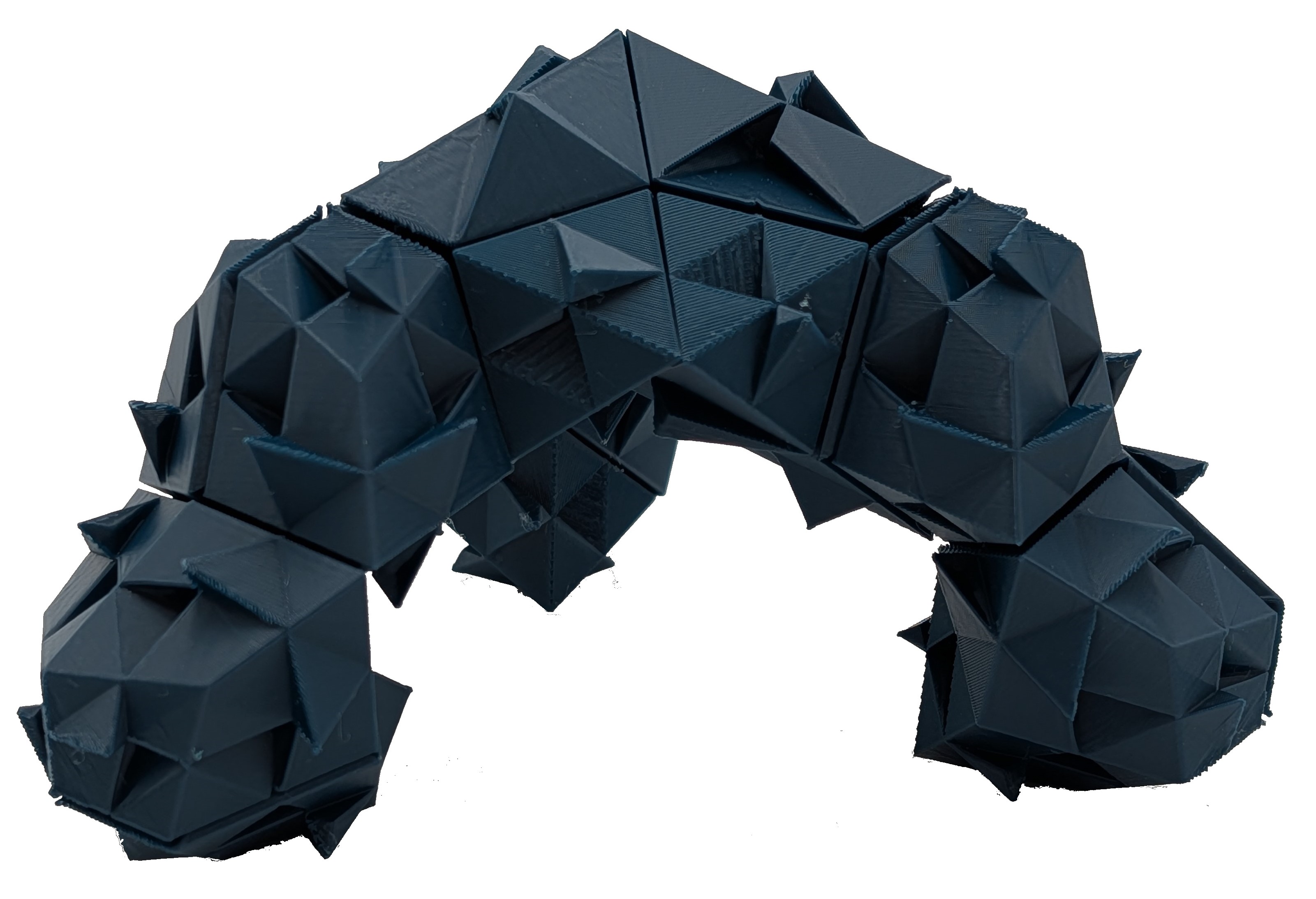}
    \end{subfigure}
    \hfill
    \begin{subfigure}[b]{0.3\textwidth}
        \includegraphics[width=\textwidth]{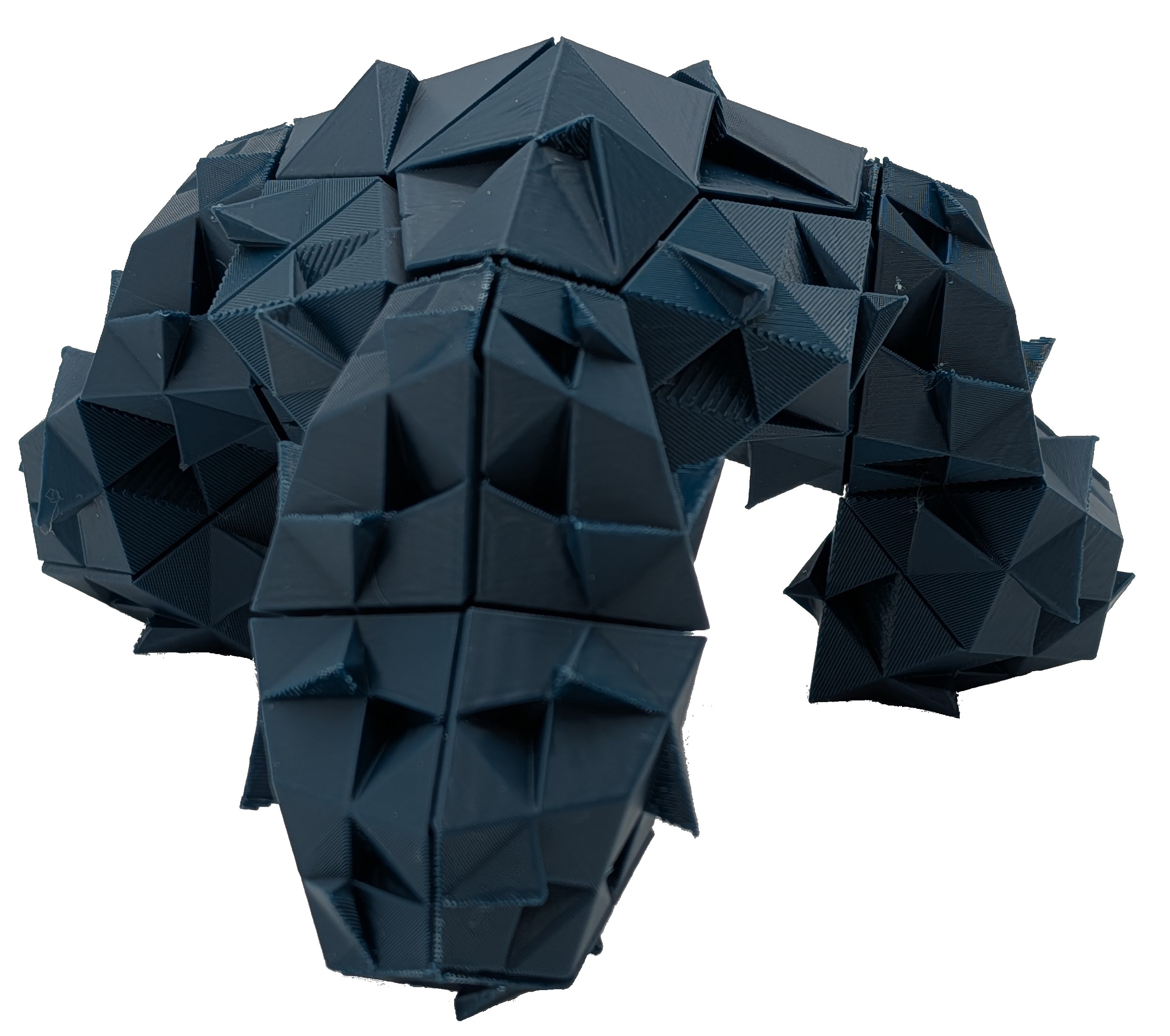}
    \end{subfigure}
    \hfill
    \begin{subfigure}[b]{0.27\textwidth}
        \includegraphics[width=\textwidth]{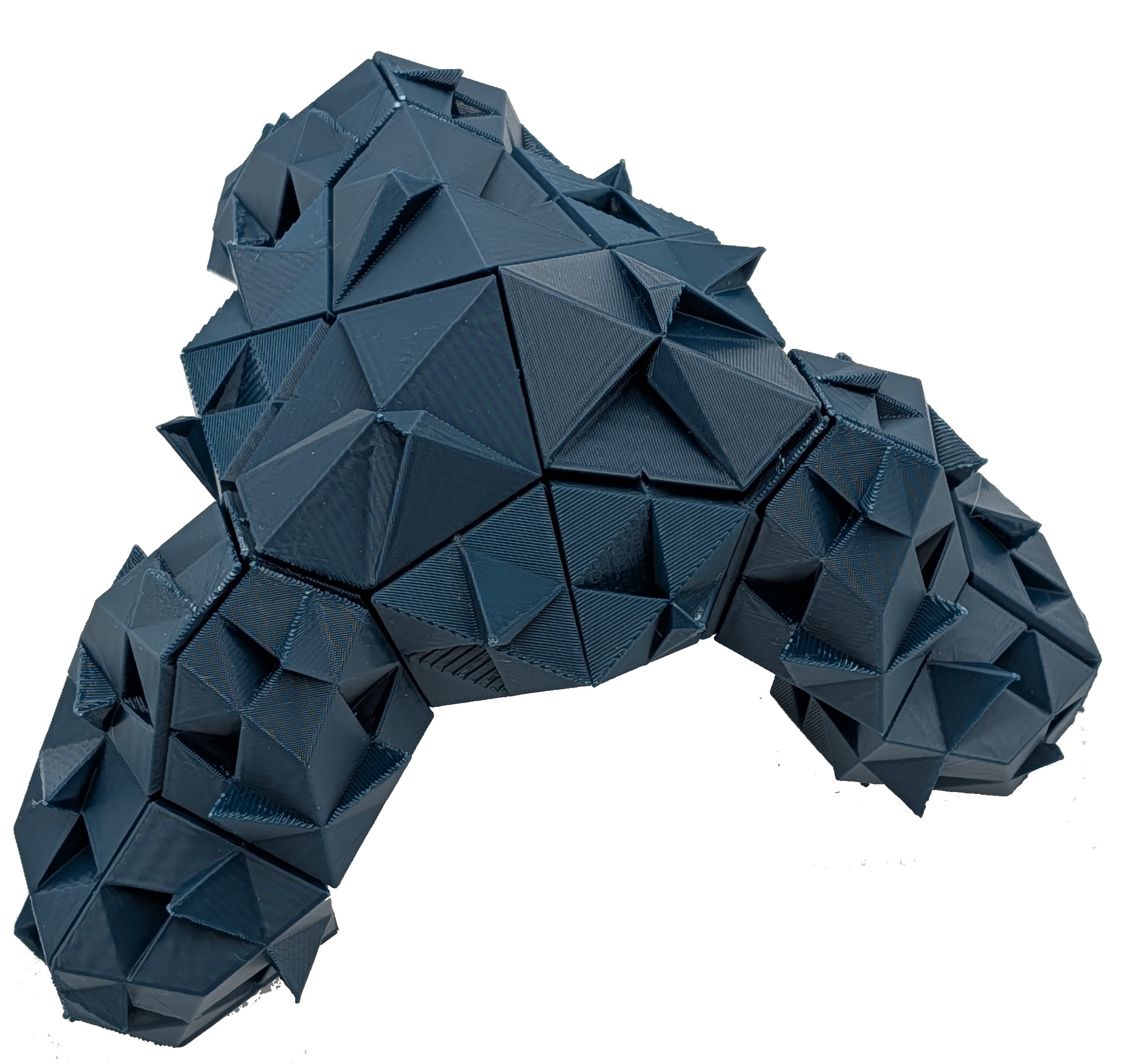}
    \end{subfigure}
    \hfill
    \caption{Different views of a 3D printed assembly of the deformation of the group $p23$. We conjecture that if the bottom blocks are fixed, this assembly is a TIA.}
    \label{fig:ass-195}
\end{figure}

\subsection*{Acknowledgement}
The authors would like to thank Bernd Souvignier for valuable discussions and for drawing our attention to some relevant literature.
The authors acknowledge the funding by the Deutsche Forschungsgemeinschaft (DFG, German Research Foundation) in the framework of the Collaborative Research Centre CRC/TRR 280 “Design Strategies for Material-Minimized Carbon Reinforced Concrete Structures – Principles of a New Approach to Construction” (project ID 417002380). Furthermore, R.\ Akpanya was
supported by a grant from the Simons Foundation (SFI-MPS-Infrastructure-00008650, JV).

\printbibliography[heading=bibintoc,title={Bibliography}]

\appendix

\section{Proofs for completeness}

\begin{theorem}{\cite[Thm. III.11 (ii)]{plesken1994}}\label{apx:thm:dirichlet-is-fund-dom}
	Let $\Gamma \leq \E(n)$ be a crystallographic group. If $u \in \R^n$ is a point in general position for $\Gamma$, then the Dirichlet cell $D(u, u^\Gamma)$ forms a fundamental domain for $\Gamma$.
\end{theorem}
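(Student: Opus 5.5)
We write $D := D(u,u^\Gamma)$, $O := u^\Gamma$, and for $\gamma\in\Gamma$ set $D_\gamma := D(u^\gamma,O)$, which equals $D^\gamma$ by Lemma~\ref{lma:dirichlet-identity}. We have to check the two conditions in the definition of a fundamental domain. Since $u$ is in general position, $\gamma\mapsto u^\gamma$ is a bijection $\Gamma\to O$, so the cells $D_\gamma$ are exactly the Dirichlet cells of the points of $O$, each point counted once.

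\textbf{Covering.} Fix $v\in\R^n$. The orbit $O$ is discrete, so $O\cap \overline{B_R(v)}$ is finite for every $R$. Choosing $R=d(v,u)$, this set is finite and non-empty, so $d(v,\cdot)$ attains a minimum on $O$ at some $u^\gamma$. Then $d(u^\gamma,v)\le d(w,v)$ for all $w\in O$, i.e.\ $v\in D_\gamma = D^\gamma$. Hence $\bigcup_{\gamma\in\Gamma} D^\gamma=\R^n$.

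\textbf{System of representatives.} First we show that $D^\circ$ contains at most one point of each orbit. Let $v\in D^\circ$. We claim $d(u,v)<d(w,v)$ for all $w\in O\setminus\{u\}$. Suppose instead $d(u,v)=d(w,v)$ for some $w\neq u$. Then $v$ lies on the bisecting hyperplane of $u$ and $w$. Moving $v$ slightly in the direction $w-u$ gives points arbitrarily close to $v$ that are strictly closer to $w$ than to $u$, hence outside $D$. This contradicts $v\in D^\circ$.

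Now let $v, v^\alpha\in D^\circ$ with $\alpha\in\Gamma$. For every $w\in O\setminus\{u\}$ we have $d(u,v^\alpha)<d(w,v^\alpha)$, and applying the isometry $\alpha^{-1}$ gives $d(u^{\alpha^{-1}},v)<d(w^{\alpha^{-1}},v)$. Hence $v$ is strictly closest to $u^{\alpha^{-1}}$ among the points of $O$. Since $v\in D^\circ$, it is also strictly closest to $u$. Uniqueness of the strict minimiser gives $u^{\alpha^{-1}}=u$, and general position forces $\alpha=\Id$. In particular $v^\alpha=v$.

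Next we build $V$. Put $V_0:=D^\circ$. For each orbit $\Omega$ that does not meet $D^\circ$, pick one point of $\Omega\cap D$; such a point exists by the covering step and the $\Gamma$-invariance of orbits. Let $V$ be $V_0$ together with these chosen points. By the previous paragraph $V$ meets every orbit exactly once, and by construction $D^\circ\subseteq V\subseteq D$.

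\textbf{Compactness.} Compactness of $D$ is not needed for the bare fundamental-domain property. If it is wanted, note that $D\subseteq D(u,u^{\T(\Gamma)})$, and the latter is bounded because $\L(\Gamma)$ is a full lattice. The covering-radius remark then applies.

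The only delicate step is the interior claim. It must be made rigorous that $v\in D^\circ$ excludes equality, using the explicit perturbation $v+t(w-u)$ for small $t>0$. Indeed $d(v+t(w-u),u)^2-d(v+t(w-u),w)^2$ equals $2t\|w-u\|^2>0$ when $d(v,u)=d(v,w)$. The remaining arguments are routine.
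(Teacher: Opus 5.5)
Your proof is correct and follows essentially the same route as the paper's: you get covering by choosing a nearest orbit point, use strict inequalities on the interior of the cell, and transport by $\alpha^{-1}$ together with general position to show that $D^\circ$ meets each orbit at most once (the paper phrases this last step via pairwise disjointness of the interiors of the cells $D(u^\alpha,u^\Gamma)$, which is the same argument). You are more careful than the paper at two points it leaves implicit: the existence of the nearest orbit point, via finiteness of $O\cap\overline{B_R(v)}$ (which holds because a discrete orbit of an isometry group is automatically uniformly discrete), and the perturbation $v+t(w-u)$ showing that interior points are strictly closer to $u$ than to every other orbit point.
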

\begin{proof}
	We first show that
	$$
		\R^n = \bigcup_{\gamma \in \Gamma} D(u, u^\Gamma)^\gamma.
	$$
By \Cref{lma:dirichlet-identity}, this is equivalent to proving  
	$$
		\R^n = \bigcup_{\gamma \in \Gamma} D(u^\gamma, u^\Gamma).
	$$
The union of Dirichlet cells on the right-hand side of the above equation forms a subset of $\R^n$. Hence, it remains to show that $\R^n \subseteq \bigcup_{\gamma \in \Gamma} D(u^\gamma, u^\Gamma) $ holds.
Let therefore $v \in \R^n$ be a point. We know that $v$ is inside the cell $D(u^{\gamma'},u^\Gamma)$, where $\gamma'\in \Gamma$ such that $d(u^{\gamma'}, v) \leq d(u^\beta, v)$ holds for all $\beta \in \Gamma$. This implies 
$$
    v \in D(u^{\gamma'}, u^\Gamma) \subseteq \bigcup_{\gamma \in \Gamma} D(u^\gamma, u^\Gamma) = \bigcup_{\gamma \in \Gamma} D(u, u^\Gamma)^\gamma.
$$ 

Next, we prove $D(u^\alpha, u^\Gamma) \cap D(u^\beta, u^\Gamma) =\emptyset$ for distinct $\alpha, \beta \in \Gamma$.
We show this statement by contradiction and assume that there exist distinct $\alpha, \beta \in \Gamma$ such that there exists $v \in \left( D(u^\alpha, u^\Gamma) \right)^\circ \cap \left(D(u^\beta, u^\Gamma)\right)^\circ$. This implies $d(v, u^\alpha) < d(v, u^\gamma)$ for all $\gamma \in \Gamma \setminus \{ \alpha \}$ and $d(v, u^\beta) < d(v, u^\delta)$ for all $\delta \in \Gamma \setminus \{ \beta \}$. Since these inequalities hold for all elements in $\Gamma$, the inequalities $d(v, u^\alpha) < d(v, u^\beta)$ and $d(v, u^\beta) < d(v, u^\alpha)$ follow. This is a contradiction.\\
Now, we have to verify that $D(u,u^\Gamma)$ contains a system of representatives of the action of $\Gamma$ on $\R^n$. We start by showing that at least one point of each orbit is contained in $D(u,u^\Gamma).$ For this, let $v \in \R^n$ be a point and $w \in v^\Gamma$ the point satisfying
	$$
		w \coloneqq \argmin_{\hat{w} \in v^\Gamma} \{ d( \hat{w}, \hat{u} ) \mid \hat{u} \in u^\Gamma \}.
	$$
    That means, $w$ is the point in the orbit of $v$
	 with minimum distance to the orbit of $u$.
    Furthermore, let $\alpha \in \Gamma$ be the group element so that $w$ is closest to $u^\alpha$, i.e.\ $d(w, u^\alpha) \leq d(w, u^\beta)$ for all $\beta \in \Gamma$. Hence, $w$ lies in the Dirichlet cell $D(u^\alpha, u^\Gamma)$. Since $\alpha$ is an isometry, $d(w^{\alpha^{-1}}, u) \leq d(w^{\alpha^{-1}}, (u^\beta)^{\alpha^{-1}})$ holds for all $\beta \in \Gamma$. The above inequality holds for all $\beta \in \Gamma$ and  $w^{\alpha^{-1}}$ is therefore contained in the Dirichlet cell around $u$. This means  $w^{\alpha^{-1}} \in D(u, u^\Gamma)$.
    Since $w^{\alpha^{-1}}\in v^\Gamma$, we conclude that $D(u, u^\Gamma)$ contains a representative $V$ of each orbit of $\Gamma$ acting on $\R^n$.
    It remains to show that $V$ also contains the interior of $D(u, u^\Gamma)$.
    This means that no two points in the interior of $D(u, u^\Gamma)$ lie in the same orbit of the action of $\Gamma$ on $\R^n$.
    Again, we prove this by contradiction and assume that there exist distinct $v, w \in \left( D(u, u^\Gamma) \right)^\circ$ and $\gamma \in \Gamma$ such that $v^\gamma = w$. The interiors of the Dirichlet cells are pairwise disjoint sets, i.e.\ $\left(D(u, u^\Gamma)\right)^\circ$ and $\left(D(u^\gamma, u^\Gamma)\right)^\circ$ are either disjoint or equal. Since $\left(D(u^\gamma, u^\Gamma)\right)^\circ = \left( \left(D(u, u^\Gamma)\right)^\circ \right)^\gamma$ and $v \in \left(D(u^\gamma, u^\Gamma)\right)^\circ$, $w = v^{\gamma^{-1}} \in \left( \left(D(u, u^\Gamma)\right)^\circ \right)^{\gamma^{-1}}$, the intersection $D(u,u^\Gamma)^\circ\cap D(u^\gamma,u^\Gamma)^\circ $ is not empty and the Dirichlet cells have to be equal. This implies that $u = u^\gamma$. Since $u$ is in general position for $\Gamma$, this implies $\gamma = \Id.$ This is a contradiction to $v \neq w$ and we conclude the proof.
\end{proof}

\begin{proposition}\label{apx:prop:translation-cell-fund-dom}
    Let $\Gamma\leq \E(n)$ be a crystallographic group and $S=\{\rho_i\mid i \in I\}\cup \{\tau_k\mid k \in K\}$ a generating set of $\Gamma$ as described in \Cref{not:cryst-grp-cosets}. If $F$ is a fundamental domain for $\Gamma,$ then
    $$
		C \coloneqq \bigcup_{i \in I} F^{\rho_i}
	$$
	forms a fundamental domain for $\T(\Gamma) \leq \E(n)$. 
\end{proposition}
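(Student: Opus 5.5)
We verify the two defining properties of a fundamental domain for $\T(\Gamma)$ directly: covering, and the existence of a system of representatives $V_C$ with $C^\circ \subseteq V_C \subseteq C$. Both rest on the coset decomposition $\Gamma = \bigcup_{i\in I}\rho_i\T(\Gamma)$ from \Cref{not:cryst-grp-cosets}. Since $\T(\Gamma)$ is normal, every $\gamma\in\Gamma$ can be written uniquely as $\gamma=\rho_i\tau$ with $i\in I$ and $\tau\in\T(\Gamma)$. With the right action used in the paper this gives $F^{\gamma}=(F^{\rho_i})^{\tau}$.

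\emph{Covering.} By the above,
$$\R^n=\bigcup_{\gamma\in\Gamma}F^\gamma=\bigcup_{\tau\in\T(\Gamma)}\Bigl(\bigcup_{i\in I}F^{\rho_i}\Bigr)^{\tau}=\bigcup_{\tau\in\T(\Gamma)}C^\tau .$$

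\emph{Representatives.} Let $V$ be a system of representatives for $\Gamma$ with $F^\circ\subseteq V\subseteq F$, and set $V_C:=\bigcup_{i\in I}V^{\rho_i}$.

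First, $V_C$ is a system of representatives for $\T(\Gamma)$. Take $x\in\R^n$; its $\Gamma$-orbit meets $V$ in exactly one point $v$, so $x=v^{\gamma}$ for some $\gamma$. Writing $\gamma=\rho_i\tau$ gives $x^{\tau^{-1}}=v^{\rho_i}\in V_C$, so the $\T(\Gamma)$-orbit of $x$ meets $V_C$. For uniqueness, suppose $v^{\rho_i}$ and $w^{\rho_j}$ lie in the same $\T(\Gamma)$-orbit, with $v,w\in V$. Then $v$ and $w$ lie in the same $\Gamma$-orbit, so $v=w$. This gives $v^{\rho_i\tau}=v^{\rho_j}$, i.e.\ $\rho_i\tau\rho_j^{-1}\in\stab_\Gamma(v)$.

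\emph{Main obstacle: points with nontrivial stabiliser.} If $v$ is in special position, this only shows that $v^{\rho_i}$ and $v^{\rho_j}$ are $\T(\Gamma)$-equivalent. We then prune $V_C$, keeping one representative in each such class. The pruning must not remove any interior point of $C$, so the key claim is: every point of $C^\circ$ has the form $v^{\rho_i}$ with $v\in F^\circ$ and general position, and such points are never pruned.

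To prove this claim, use Proposition~\ref{prop:special-pos-inside-fund-dom}: special-position points of $F$ lie in $\partial F$. Combine this with the fact that the translates $F^{\rho_i}$ have pairwise disjoint interiors. Indeed, if $F^{\rho_i}$ and $F^{\rho_j}$ with $i\neq j$ shared an interior point, the $\Gamma$-orbit of some point would meet $F^\circ$ twice, contradicting the definition of $V$. Hence, if $v$ is in general position, then $\rho_i\tau\rho_j^{-1}=\Id$. Since $i\ne j$ would put $\rho_i$ and $\rho_j$ in the same coset, this forces $i=j$ and $\tau=\Id$. So for general-position $v$ the points $v^{\rho_i}$ are pairwise $\T(\Gamma)$-inequivalent, and no pruning is needed there.

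\emph{Interior of $C$.} It remains to show $C^\circ\subseteq\bigcup_i (F^{\rho_i})^\circ$, up to points we can handle. This is where I expect care to be needed, since a point on a shared face of $F^{\rho_i}$ and $F^{\rho_j}$ can lie in $C^\circ$. For such a boundary point $x$, I would argue that its $\T(\Gamma)$-orbit meets $C^\circ$ only in $x$. Otherwise a nontrivial translation would map a small ball around $x$, which is covered by $\Gamma$-translates of $F$ lying inside $C$, into $C^\circ$. Taking a general-position point in that ball then yields two $\T(\Gamma)$-equivalent general-position points in $\bigcup_i (F^{\rho_i})^\circ$, contradicting the previous paragraph. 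Hence we may choose $x$ itself as the representative of its class, so $C^\circ\subseteq V_C\subseteq C$ after pruning, and $C$ is a fundamental domain for $\T(\Gamma)$.
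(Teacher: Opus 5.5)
Your proof has the same skeleton as the paper's: covering via $\Gamma=\bigcup_i\rho_i\T(\Gamma)$, the candidate $V_C=\bigcup_i V^{\rho_i}$, and uniqueness via $\rho_i\tau\rho_j^{-1}\in\stab_\Gamma(v)$. You are more careful exactly where the paper's argument is incomplete, and that care is needed.

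\begin{itemize}
\item The paper passes from $a=b$ to $\rho_i\tau\rho_j^{-1}=\Id$, which ignores special-position points of $V\cap\partial F$. Take $p2$ with $F=[0,\frac12]\times[0,1]$, $\rho_1=\Id$, $\rho_2=-\Id$. Whichever of $(\frac12,0),(\frac12,1)$ lies in $V$ yields two distinct $\T(\Gamma)$-equivalent points of $V_C$, so there $V_C$ is never a system of representatives.
\item The paper only shows $\bigcup_i(F^\circ)^{\rho_i}\subseteq V'$ and never checks $C^\circ\subseteq V'$.
\end{itemize}
Your first formulation of the ``key claim'' is false for the reason you then note yourself. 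For $\rho_2\colon z\mapsto -z+(1,1)$ one gets $C=[0,1]^2$, and even the special point $(\frac12,\frac12)$ lies in $C^\circ$. Your last paragraph supersedes that claim.

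That last paragraph is in fact the whole proof. Suppose every $\T(\Gamma)$-orbit meets $C$ (your covering step) and meets $C^\circ$ in at most one point. Then $C^\circ$, together with one point of $C$ from each orbit that misses $C^\circ$, is the required system of representatives. You need neither $V_C$ nor pruning; indeed $x$ need not lie in $V_C$ at all, so you are re-choosing rather than pruning.

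One step there must be tightened. A general-position point of the ball need not lie in $G:=\bigcup_i(F^{\rho_i})^\circ$, since it may sit on a shared face. What you need is $y$ with both $y\in G$ and $y^\tau\in G$.
\begin{itemize}
\item Such $y$ exists when $F$ is closed. Each $\partial F^{\rho_i}=F^{\rho_i}\setminus(F^{\rho_i})^\circ$ is then closed and nowhere dense, so $G$ is dense in $C^\circ$. Hence $B\cap G\cap G^{\tau^{-1}}\neq\emptyset$ whenever $B,B^\tau\subseteq C^\circ$.
\item General position is then automatic by Proposition~\ref{prop:special-pos-inside-fund-dom}. Write $y=a^{\rho_i}$ and $y^\tau=b^{\rho_j}$ with $a,b\in F^\circ\subseteq V$. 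This gives $a=b$, hence $\rho_i\tau\rho_j^{-1}=\Id$ and so $\tau=\Id$.
\end{itemize}
Closedness of $F$, which the paper assumes tacitly, is genuinely needed here. Consider a non-closed $F$ whose translates fill some open set only jointly, for instance one translate containing the points of rational abscissa and another those of irrational abscissa. Then $C^\circ$ can contain two $\T(\Gamma)$-equivalent points.
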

\begin{proof}
First, we recall from \Cref{not:cryst-grp-cosets} that $\Gamma = \bigcup_{i \in I} \rho_i\T(\Gamma)$ holds. This implies 
$$
    \R^n = \bigcup_{i \in I}\bigcup_{\tau \in \mathcal{T}(\Gamma)}F^{\rho_i\tau}=\bigcup_{\tau \in \mathcal{T}(\Gamma)} {\left(\bigcup_{i \in I}F^{\rho_i}\right)}^{\tau} =\bigcup_{\tau \in \mathcal{T}(\Gamma)} C^\tau. 
$$ 
Next, we establish the existence of a system of representatives $V'$ for the orbits of $\T(\Gamma)$ acting on $\R^n$ satisfying $C^\circ\subseteq V'\subseteq C$. Let therefore $V$ denote the system of representatives for the orbits of $\Gamma$ acting on $\R^n$ with $F^\circ \subseteq V \subseteq F$. The existence of $V$ implies $\bigcup_{i \in I} (F^{\circ})^{\rho_i} \subseteq \bigcup_{i \in I} V^{\rho_i} \subseteq \bigcup_{i \in I} F^{\rho_i} = C$.
This allows us to define $V'$ as
$$
    V' \coloneqq \bigcup_{i \in I} V^{\rho_i}.
$$ 
We claim that $V'$ is a system of representatives for the orbits of $\T(\Gamma)$ acting on $\R^n$. First, we have to show that $V'$ contains at least one point of each $\mathcal{T}(\Gamma)$-orbit.
So, let $x \in \R^n$ be an arbitrary point. Since $V$ is a system of representatives of the orbits of $\Gamma$ acting on $\R^n$, there exist $z \in V$ and $\gamma \in \Gamma$ such that $x = z^\gamma$. Thus, there are $i \in I$ and $\tau \in \T(\Gamma)$ such that $\gamma = \rho_i \tau$.
If we define $y \coloneqq z^{\rho_i}$, it follows that $y \in x^{\T(\Gamma)}$. Because of $y \in V^{\rho_i}$, the statement $y \in V'$ follows.\\
Next, we show that $V'$ contains at most one point of each orbit of $\T(\Gamma)$ acting on $\R^n$. 

We assume that there are $x, y \in F^\circ$ with $x \neq y$ and $\tau \in \T(\Gamma)$ such that $x^\tau = y$. 

By definition of $V'$, we know that there exists $\rho_i$ and $\rho_j$ such that $x\in V^{\rho_i}$ and $y\in V^{\rho_j}.$
Again, since $V$ is a system of representatives of the action of $\Gamma$ on $\R^n$, there exist $a, b \in F^\circ$ such that $a^{\rho_i} = x$ and $b^{\rho_j} = y$. This implies that $a^{\rho_i \tau \rho_j^{-1}} = x^{\tau \rho_j^{-1}} = y^{\rho_j^{-1}} = b$. Since $a$ and $b$ are contained in the fundamental domain $V$, we get that $a = b$ and thus $\rho_i \tau \rho_j^{-1} = Id$. As $\rho_i \tau = \rho_j$ and $\{ \rho_i \}_{i \in I}$ are representatives of the cosets this implies that $i = j$ and therefore $x = a^{\rho_i} = b^{\rho_j} = y$ which is a contradiction.

\end{proof}

\end{document}